\documentclass[10pt]{article}
\usepackage[a4paper,centering,scale=0.75]{geometry}
\usepackage{mathtools} 
\usepackage{amsthm,amssymb} 
\usepackage{mathrsfs}

\newtheorem{prop}{Proposition}[section]
\newtheorem{thm}[prop]{Theorem}

\theoremstyle{remark}

\theoremstyle{definition}
\newtheorem{hyp}[prop]{Assumption}

\newcommand{\sC}{\mathsf{C}}
\newcommand{\dom}{\mathsf{D}}
\newcommand{\ep}{\varepsilon}
\newcommand{\cF}{\mathscr{F}}
\newcommand{\cL}{\mathscr{L}}

\renewcommand{\geq}{\geqslant}
\renewcommand{\leq}{\leqslant}

\numberwithin{equation}{section}

\DeclarePairedDelimiter\abs{\lvert}{\rvert}
\DeclarePairedDelimiter\norm{\lVert}{\rVert}
\DeclarePairedDelimiterX\ip[2]{\langle}{\rangle}{#1,#2}

\newcommand{\embed}{\hookrightarrow}
\newcommand{\longto}{\longrightarrow}

\begin{document}
\title{On the differentiability of solutions to \\ singularly
  perturbed SPDEs}
\author{Carlo Marinelli\thanks{Department of Mathematics, University
    College London, Gower Street, London WC1E 6BT, United
    Kingdom. URL: \texttt{http://goo.gl/4GKJP}}}
\date{\normalsize December 30, 2020}

\maketitle

\begin{abstract}
  We consider semilinear stochastic evolution equations on Hilbert
  spaces with multiplicative Wiener noise and linear drift term of the
  type $A + \varepsilon G$, with $A$ and $G$ maximal monotone
  operators and $\varepsilon$ a ``small'' parameter, and study the
  differentiability of mild solutions with respect to
  $\varepsilon$. The operator $G$ can be a singular perturbation of
  $A$, in the sense that its domain can be strictly contained in the
  domain of $A$.
\end{abstract}


\section{Introduction}
Let us consider a stochastic evolution equation of the type
\begin{equation}
  \label{eq:ep}
  du_\ep + (A+\ep G)u_\ep\,dt =
  f(u_\ep)\,dt + B(u_\ep)\,dW,
  \qquad u_\ep(0) = u_0,
\end{equation}
posed on a Hilbert space $H$, where $A$ and $G$ are linear maximal
monotone operators on $H$ such that the closure of $A+\ep G$ (denoted
by the same symbol for simplicity), with $\ep$ a positive parameter,
is also maximal monotone for sufficiently small values of $\ep$.
Moreover, $W$ is a cylindrical Wiener process, and the random
time-dependent coefficients $f$, $B$ satisfy suitable measurability
and Lipschitz continuity conditions. 
Then \eqref{eq:ep} admits a unique mild solution $u_\ep$ with
continuous paths for every positive $\ep$ close to or equal to
zero. Denoting the mild solution to \eqref{eq:ep} with $\ep=0$ by $u$,
if $A+\ep G$ converges to $A$ in the strong resolvent sense as $\ep
\to 0$, then $u_\ep$ converges to $u$ in a rather strong topology
(implying, in particular, the uniform convergence in probability on
compact time intervals). 

Our main interest is about cases where the domain of $G$ is included
in the domain of $A$ and $A$ is ``hyperbolic'' (in the terminology of
Kato, cf.~\cite{Kato:sing}, i.e. the semigroup $S_A$ generated by $-A$
is not holomorphic).
The aim is then to study the differentiability of arbitrary order of
the map $\ep \mapsto u_\ep$, under suitable assumptions on the
interplay between the semigroup $S_A$ and $G$ and regularity
assumptions on the initial datum and the coefficients of
\eqref{eq:ep}. This allows to construct series expansions in $\ep$ of
functionals of $u_\ep$ ``centered'' around the corresponding
functionals of $u$, appealing to arguments based on Taylor's formula,
composition of (Fr\'echet) differentiable maps, and composition of
formal series. Such asymptotic expansions have been obtained in
\cite{cm:epsd1} in the simpler case where $f$ and $B$ are random
time-dependent maps that do not explicitly depend on the unknown. The
results of \cite{cm:epsd1} have been obtained under the assumption
that the semigroups generated by $-A$ and $-G$ commute, which implies
that $S_{A+\ep G}=S_A S_{\ep G}$ (with obvious meaning of the
notation), and using a Taylor-type formula for $S_{\ep G}$ (see
\cite[Proposition~1.1.6]{BuBe}). Here we proceed in a completely
different way, which essentially consists in differentiating
\eqref{eq:ep} with respect to the parameter $\ep$. This method seems
both clearer and more powerful, at least in the sense that, even
assuming that $S_{A+\ep G}=S_A S_{\ep G}$, the technique of expanding
$S_{\ep G}$ in a series of Taylor type does not appear to be helpful
in the case of equations with drift and diffusion coefficients
depending on the unknown.
It would not be difficult, in our approach, to allow $u_0$, $f$ and
$B$ to depend on $\ep$ too, at the cost of little more than heavier
notation. We also do not assume that $A$ and $G$ are (negative)
generators of commuting semigroups, even though our alternative weaker
hypotheses are admittedly still quite strong and most likely not easy
to check. On the other hand, the differentiability of $\ep \mapsto
u_\ep$, even in the linear deterministic setting where both $f$ and
$B$ are zero, is difficult to obtain already at order two
(cf.~\cite[pp.~506-ff.]{Kato}). It seems hence unlikely that
asymptotic expansions as $\ep \to 0$ can be obtained without rather
heavy assumptions.

We remark that, even though we discuss in this paper only the
differentiability of $u_\ep$ with respect to $\ep$, all results about
series expansions of functionals of $u_\ep$ around the corresponding
functionals of $u$ obtained in \cite{cm:epsd1} immediately extend to
the more general situation considered here. In particular, one can
extend the results on the parabolic regularization of a suitably
extended Musiela's SPDE obtained in \emph{op.~cit.}, where the
volatilities and the drift term are random time-dependent maps that do
not depend on the forward curve, to the ``full'' Musiela SPDE where the
volatilities, hence the drift term, may indeed depend on the forward
curve.

The content of the remaining text is organized as follows: in
section~\ref{sec:prel}, after stating the main assumptions on $A$ and
$G$, we obtain some auxiliary results of analytic nature on the
behavior of the semigroup with negative generator $A$ and $A+\ep G$ on
the domain of powers of $G$. Moreover, we recall results on existence,
uniqueness, and continuous dependence on the coefficients for
stochastic evolution equations on Hilbert spaces.
In section~\ref{sec:fd} we formally differentiate \eqref{eq:ep} with
respect to $\ep$ and establish the existence and uniqueness of
solutions to the stochastic evolution equations thus obtained, as well
as their continuous dependence on the parameter $\ep$. In the final
sections \ref{sec:d1} and \ref{sec:dh} we prove the differentiability
of the map $\ep \mapsto u_\ep$, showing that its derivative of order
$k$ coincides with the mild solution to the equations obtained by
formally differentiating equation \eqref{eq:ep} $k$ times with respect
to $\ep$.

\medskip\par\noindent%
\textbf{Acknowledgments.} The author gratefully acknowledges the
hospitality of the Interdisziplin\"ares Zentrum f\"ur Komplexe
Systeme at the University of Bonn, where this paper was written.

\section{Preliminaries and auxiliary results}
\label{sec:prel}
Let us begin with some conventions and notations. All random
quantities will be defined on a fixed probability space
$(\Omega,\cF,\mathbb{P})$ endowed with a filtration $(\cF_t)_{t \in
  [0,T]}$ satisfying the so-called usual conditions. A fixed
cylindrical Wiener process on a separable Hilbert space $U$ will be
denoted by $W$.  A Hilbert space $H$, with scalar product
$\ip{\cdot}{\cdot}$ and norm $\norm{\cdot}$, will be fixed from now
on.  Let $E$ be a Banach space. The (quasi-)normed space of adapted,
continuous $E$-valued processes $X$ such that
\[
\norm[\big]{X}_{\sC^p(E)} := \norm[\big]{X}_{L^p(\Omega;C([0,T];E))} < \infty,
\]
with $p>0$, is denoted by $\sC^p(E)$, or just by $\sC^p$ if
$E=H$. This is a Banach spaces for $p \geq 1$, and a quasi-Banach
space for $p<1$.
The Banach space of multilinear maps from $H^k$ to $E$, with $k$ a
positive integer, will be denoted by $\cL_k(H;E)$. If $E$ is a Hilbert
space, we shall write $\cL^2(U;E)$ to mean the Hilbert space of
Hilbert-Schmidt operators from $U$ to $E$.

\medskip

Let $A$ and $G$ be linear maximal monotone operators on $H$. We shall
denote the strongly continuous semigroup of contractions on $H$
generated by $-A$ by $S_A$ (and analogously for any other maximal
monotone operator). We recall that $G$ is a closed operator, hence so
is every power $G^k$, with $k$ positive integer, and the domain
$\dom(G^k)$ of $G^k$, endowed with the scalar product
\[
\ip[\big]{\phi}{\psi}_{\dom(G^k)} := \ip{\phi}{\psi} +
\ip{G\phi}{G\psi} + \cdots + \ip{G^k\phi}{G^k\psi},
\]
is a Hilbert space.

The following assumption will be in force throughout.
\begin{hyp}
  \label{h:1}
  There exists $\ep_0$ such that the closure of $A+\ep G$ is maximal
  monotone for every $\ep \in \mathopen]0,\ep_0]$. Moreover, there
  exist an integer $m \geq 1$ and constants $\alpha_1,\ldots,\alpha_{m+1}$
  such that, for every $k \in \{1,\ldots,m+1\}$,
  \[
  \norm[\big]{G^k S_A(t)\phi} \leq e^{\alpha_k t} \norm[\big]{G^k \phi}
  \qquad \forall \phi \in \dom(G^k), \; t \geq 0.
  \]
\end{hyp}
\noindent The assumption is satisfied, for instance, if $S_A$ and
$S_G$ commute. Moreover, it immediately implies that there exists a
constant $\alpha$ such that
\[
  \norm[\big]{S_A(t)\phi}_{\dom(G^m)} \leq e^{\alpha t}
  \norm[\big]{\phi}_{\dom(G^m)}
  \qquad \forall \phi \in \dom(G^m), \; t \geq 0.
\]
Note that $A+\ep G$ is closable because it is monotone, so the
assumption about it is that, for every $\ep \in \mathopen]0,\ep_0]$,
there exists $\lambda>0$ such that the image of $\lambda + A +\ep G$
is dense in $H$ . We shall abuse notation in the following writing
$A+\ep G$ to mean its closure, and we shall assume, for simplicity,
that $\ep_0=1$.

\subsection{Analytic preliminaries}
Assumption~\ref{h:1} implies analogous estimates for $S_{A+\ep G}$.
\begin{prop}
  \label{prop:seh}
  One has, for every $k \in \{1,\ldots,m+1\}$,
  \[
  \norm[\big]{G^k S_{A+\ep G}(t)\phi} \leq e^{\alpha_k t} \norm[\big]{G^k \phi}
  \qquad \forall \phi \in \dom(G^k), \; t \geq 0.
  \]
\end{prop}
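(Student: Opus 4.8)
The plan is to reduce the claim to an infinitesimal dissipativity inequality and then run a Gronwall-type energy estimate on a regularized semigroup, using only the monotonicity of $G$ to control the perturbation. First I would recast Assumption~\ref{h:1} in generator form: squaring the bound $\norm{G^k S_A(t)\phi} \leq e^{\alpha_k t}\norm{G^k\phi}$, noting that both sides agree at $t=0$, and comparing right derivatives there—which is licit for $\phi$ in the domain of $G^k A$, i.e. $\phi \in \dom(G^k)\cap\dom(A)$ with $A\phi \in \dom(G^k)$—yields
\[
\ip{G^k A\phi}{G^k\phi} \geq -\alpha_k\norm{G^k\phi}^2.
\]
This is the only information about $A$ that I will use.

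Since it is not clear a priori that $S_{A+\ep G}$ leaves $\dom(G^k)$ invariant, I would regularize $G$ by its Yosida approximation $G_\lambda := G(I+\lambda G)^{-1}$, $\lambda>0$, which is bounded and monotone (so that $\ip{G_\lambda\psi}{\psi}\geq 0$ for all $\psi \in H$) and commutes with $G$, hence with $G^k$. Then $A+\ep G_\lambda$ is a bounded monotone perturbation of $A$ and generates a contraction semigroup $S^\lambda := S_{A+\ep G_\lambda}$. Because $G_\lambda$ is bounded and commutes with $G^k$ while $S_A$ leaves $\dom(G^k)$ invariant by Assumption~\ref{h:1}, one checks that $S^\lambda$ leaves the Hilbert space $\dom(G^k)$ invariant and restricts there to a $C_0$-semigroup; in particular, for $\phi$ in the domain of the corresponding restricted generator, $u^\lambda(t) := S^\lambda(t)\phi$ is of class $C^1$ with values in $\dom(G^k)$ and satisfies $Au^\lambda(t) \in \dom(G^k)$.

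For such $\phi$ I would then compute, using $\dot u^\lambda = -(A+\ep G_\lambda)u^\lambda$ and the commutation $G^k G_\lambda = G_\lambda G^k$,
\[
\tfrac12\tfrac{d}{dt}\norm{G^k u^\lambda}^2
= -\ip{G^k A u^\lambda}{G^k u^\lambda} - \ep\ip{G_\lambda G^k u^\lambda}{G^k u^\lambda}.
\]
The first term is bounded above by $\alpha_k\norm{G^k u^\lambda}^2$ through the generator inequality of the first step (applied with $\psi = u^\lambda(t)$), while the second is nonnegative by monotonicity of $G_\lambda$, so that $-\ep\ip{G_\lambda G^k u^\lambda}{G^k u^\lambda} \leq 0$. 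Gronwall's lemma then gives $\norm{G^k S^\lambda(t)\phi} \leq e^{\alpha_k t}\norm{G^k\phi}$, uniformly in $\lambda$, and by density of the restricted generator's domain in $\dom(G^k)$ together with the weak closedness of $G^k$ this extends to every $\phi \in \dom(G^k)$. The crux here is that the perturbation is handled through the nonnegative quadratic form $\ip{G_\lambda\,\cdot}{\cdot}$, which keeps the estimate at order $k$ instead of forcing an increase to order $k+1$.

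Finally I would let $\lambda \to 0$. As $G_\lambda \to G$ strongly on $\dom(G)$, the maximal monotone operators $A+\ep G_\lambda$ converge to $A+\ep G$ in the strong resolvent sense, whence $S^\lambda(t)\phi \to S_{A+\ep G}(t)\phi$ in $H$, uniformly on compact time intervals, by the Trotter--Kato theorem for monotone operators. The uniform bound from the previous step makes the family $(G^k S^\lambda(t)\phi)_\lambda$ bounded in $H$; extracting a weakly convergent subsequence and invoking the weak closedness of the graph of $G^k$ shows that $S_{A+\ep G}(t)\phi \in \dom(G^k)$ with $\norm{G^k S_{A+\ep G}(t)\phi} \leq e^{\alpha_k t}\norm{G^k\phi}$, which is the assertion. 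I expect the main obstacle to lie in the regularity bookkeeping of the two middle steps: verifying that $S^\lambda$ genuinely restricts to a $C^1$ orbit in $\dom(G^k)$ with $Au^\lambda(t) \in \dom(G^k)$, so that the energy identity is rigorous rather than merely formal, and confirming the strong resolvent convergence underlying the passage to the limit.
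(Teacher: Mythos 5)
Your argument is correct, but it takes a genuinely different route from the paper's. The paper never leaves the semigroup level: it invokes the Trotter product formula $S_{A+\ep G}(t)\phi = \lim_{n\to\infty}\bigl(S_A(t/n)S_{\ep G}(t/n)\bigr)^n\phi$ (whose validity rests on the closure of $A+\ep G$ being maximal monotone), propagates the estimate through the discrete products by induction --- each factor $S_A(t/n)$ costs $e^{\alpha_k t/n}$ by Assumption~\ref{h:1}, while each factor $S_{\ep G}(t/n)$ costs nothing because it commutes with $G^k$ and is a contraction --- and then concludes, exactly as you do, by extracting a weakly convergent subsequence and using weak closedness of $G^k$ together with weak lower semicontinuity of the norm. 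You instead work at the generator level: you convert Assumption~\ref{h:1} into the dissipativity inequality $\ip{G^kA\phi}{G^k\phi}\geq-\alpha_k\norm{G^k\phi}^2$, replace $G$ by its Yosida approximation $G_\lambda$ so that invariance of $\dom(G^k)$ and $C^1$ orbits become available (via a bounded-perturbation/Dyson--Phillips argument), run a Gronwall estimate in which the perturbation is discarded by monotonicity of $G_\lambda$, and only then pass to the limit $\lambda\to 0$. Both proofs exploit the same structural facts (monotonicity associated with $G$, quasi-contractivity of $S_A$ on $\dom(G^k)$) and share the same weak-compactness ending; yours avoids the Trotter formula and isolates an infinitesimal form of Assumption~\ref{h:1} that could be useful elsewhere, at the price of substantially heavier regularity bookkeeping --- which, as you anticipate, does go through.

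The one step you should not wave through is the strong resolvent convergence $A+\ep G_\lambda\to A+\ep G$. It does not follow from the strong convergence $G_\lambda\to G$ on $\dom(G)$ alone: for general maximal monotone $A$ and $G$, the resolvents $(\mu+A+\ep G_\lambda)^{-1}$ need not converge to the resolvent of any maximal monotone operator. What saves you is precisely Assumption~\ref{h:1}: since the closure of $A+\ep G$ is maximal monotone, the range of $\mu+A+\ep G$ is dense in $H$, and for $f=(\mu+A+\ep G)y$ with $y\in\dom(A)\cap\dom(G)$, writing $x_\lambda:=(\mu+A+\ep G_\lambda)^{-1}f$, subtracting the two equations and testing against $x_\lambda-y$ gives, by monotonicity of $A$ and of $G_\lambda$, $\mu\norm{x_\lambda-y}\leq\ep\norm{G_\lambda y-Gy}\to 0$; uniform boundedness of the resolvents then upgrades this to convergence on all of $H$. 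This is the Brezis--Crandall--Pazy perturbation theorem; either cite it or include this computation, since it is exactly the point where the maximality hypothesis --- which in the paper's proof enters through the validity of the Trotter product formula --- enters yours.
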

\begin{proof}
  Let $k \in \{1,\ldots,m+1\}$ and $\phi \in \dom(G^k)$. One has, by the
  Trotter product formula (see, e.g., \cite[p.~227]{EnNa}),
  \begin{equation}
    \label{eq:tro}
    S_{A+ \ep G}(t) \phi = \lim_{n \to \infty} \bigl( S_A(t/n)
    S_{\ep G}(t/n) \bigr)^n \phi,
  \end{equation}
  where, for any natural $n \geq 1$,
  \begin{align*}
    \norm[\big]{G^k S_A(t/n)
    S_{\ep G}(t/n) \phi} 
    &\leq e^{\alpha_k t/n} \norm[\big]{G^k S_{\ep G}(t/n) \phi}\\
    &= e^{\alpha_k t/n} \norm[\big]{S_{\ep G}(t/n) G^k\phi}\\
    &\leq e^{\alpha_k t/n} \norm[\big]{G^k\phi}.
  \end{align*}
  Let us show that if 
  \[
    \norm[\big]{G^k \bigl(S_A(t/n) S_{\ep G}(t/n)\bigr)^j \phi} \leq
    e^{\alpha_k tj/n} \norm[\big]{G^k\phi}
  \]
  (which has just been proved with $j=1$) holds for a natural number $j<n$,
  then the same inequality holds with $j$ replaced by $j+1$. In fact, 
  \begin{align*}
  &\norm[\big]{G^k \bigl(S_A(t/n) S_{\ep G}(t/n)\bigr)^{j+1} \phi}\\
  &\hspace{3em} = \norm[\big]{G^k S_A(t/n) S_{\ep G}(t/n)
    \bigl( S_A(t/n) S_{\ep G}(t/n)\bigr)^j \phi}\\
  &\hspace{3em} \leq e^{\alpha_k t/n} \norm[\big]{G^k
    \bigl( S_A(t/n) S_{\ep G}(t/n)\bigr)^j \phi}\\
  &\hspace{3em} \leq e^{\alpha_k (j+1)t/n} \norm[\big]{G^k \phi}.
  \end{align*}
  In particular, one has
  \[
    \norm[\big]{G^k \bigl(S_A(t/n) S_{\ep G}(t/n)\bigr)^n \phi} \leq
    e^{\alpha_k t} \norm[\big]{G^k \phi},
  \]
  which implies that there exists $\zeta \in H$ such that, passing to
  a subsequence if necessary,
  \[
    G^k \bigl(S_A(t/n) S_{\ep G}(t/n)\bigr)^n \phi \to \zeta
  \]
  weakly in $H$. The closedness of $G^k$ (in the strong-weak sense)
  and \eqref{eq:tro} imply that $\zeta = G^kS_{A+\ep G}(t)\phi$, and
  weak lower semicontinuity of the norm yields
  \[
    \norm[\big]{G^kS_{A+\ep G}(t)\phi} \leq e^{\alpha_k t} \norm[\big]{G^k \phi}.
    \qedhere
  \]
\end{proof}

Let us introduce the resolvents of $A$ and $A+\ep G$ defined, for
$\lambda>0$, as
\[
  R_\lambda := (\lambda + A)^{-1}, \qquad
  R_\lambda(\ep) := (\lambda + A + \ep G)^{-1},
\]
respectively. By $R_\lambda(0)$ we shall mean $R_\lambda$.

\begin{prop}
  \label{prop:rc}
  For any $\ep \in [0,1]$ and $\lambda>\alpha_1$, $R_\lambda(\ep+h)x$
  converges to $R_\lambda(\ep)x$ as $h \to 0$ for all $x \in H$.
\end{prop}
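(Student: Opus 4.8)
The plan is to prove strong convergence of the bounded operators $R_\lambda(\ep+h)$ to $R_\lambda(\ep)$ by first establishing it on the dense subspace $\dom(G)$ and then extending to all of $H$ by a uniform bound and a routine three-term estimate. Since each $A+\ep G$ (and its closure) is maximal monotone, for $\lambda>0$ one has the contraction-type bound $\norm{R_\lambda(\ep)}_{\cL(H)}\le 1/\lambda$, uniformly in $\ep\in[0,1]$; for $\abs{h}$ small enough $\ep+h\in[0,1]$, so $R_\lambda(\ep+h)$ is defined and obeys the same bound (at the endpoints $\ep\in\{0,1\}$ the limit is one-sided). Because $-G$ generates a $C_0$-semigroup, $\dom(G)$ is dense in $H$; hence, once convergence is known on $\dom(G)$, for arbitrary $x\in H$ one approximates $x$ by $\xi\in\dom(G)$ and uses $\norm{R_\lambda(\ep+h)(x-\xi)}+\norm{R_\lambda(\ep)(\xi-x)}\le 2\lambda^{-1}\norm{x-\xi}$ to conclude. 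So it suffices to treat $x\in\dom(G)$.

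For such $x$, I would first show $v:=R_\lambda(\ep)x\in\dom(G)$. Writing the resolvent as the Laplace transform of the semigroup, $v=\int_0^\infty e^{-\lambda t}S_{A+\ep G}(t)x\,dt$, Proposition~\ref{prop:seh} with $k=1$ gives $\norm{GS_{A+\ep G}(t)x}\le e^{\alpha_1 t}\norm{Gx}$, so that $t\mapsto e^{-\lambda t}GS_{A+\ep G}(t)x$ is Bochner integrable whenever $\lambda>\alpha_1$ (we may assume $\alpha_1\ge 0$, as the growth constants can always be enlarged, so that $\lambda>\alpha_1$ also secures $\lambda>0$). Since $G$ is closed, it commutes with the Bochner integral, whence $v\in\dom(G)$ with $\norm{Gv}\le\norm{Gx}/(\lambda-\alpha_1)$.

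Next I would derive the resolvent identity. In the setting of interest $\dom(G)\subseteq\dom(A)$, so $\dom(G)$ is a common core for the closures $A+\ep G$ and $A+(\ep+h)G$, on which they act by literal summation. As $v\in\dom(G)$, this gives $v\in\dom(A+(\ep+h)G)$ with $(\lambda+A+(\ep+h)G)v=(\lambda+A+\ep G)v+hGv=x+hGv$. Applying $R_\lambda(\ep+h)$ yields $v=R_\lambda(\ep+h)x+hR_\lambda(\ep+h)Gv$, that is,
\[
R_\lambda(\ep+h)x-R_\lambda(\ep)x=-h\,R_\lambda(\ep+h)Gv,
\]
so that $\norm{R_\lambda(\ep+h)x-R_\lambda(\ep)x}\le\abs{h}\lambda^{-1}\norm{Gv}\le\abs{h}\,\norm{Gx}\big/\bigl(\lambda(\lambda-\alpha_1)\bigr)\to 0$ as $h\to 0$. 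Together with the density reduction this proves the statement.

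The hard part will be the justification of the splitting $(\lambda+A+(\ep+h)G)v=(\lambda+A+\ep G)v+hGv$ for $v$ in the domain of the \emph{closure} of $A+\ep G$: this is exactly where I rely on $v\in\dom(G)$ combined with $\dom(G)\subseteq\dom(A)$, so that $v$ lies in the common core and no closure subtleties intervene. A secondary technical point is the strong measurability of $t\mapsto GS_{A+\ep G}(t)x$ required for the Bochner integral; this is harmless, since by Proposition~\ref{prop:seh} the semigroup $S_{A+\ep G}$ restricts to a bounded semigroup on $\dom(G)$ with its graph norm, making the map continuous into $H$.
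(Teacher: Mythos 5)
Your proposal follows essentially the same route as the paper: reduce to $x \in \dom(G)$ using density and the uniform bound $\norm{R_\lambda(\ep)}_{\cL(H)} \leq \lambda^{-1}$; use the Laplace-transform representation of the resolvent, the closedness of $G$, and Proposition~\ref{prop:seh} to show that $R_\lambda(\ep)$ maps $\dom(G)$ into itself with $\norm{GR_\lambda(\ep)x} \leq (\lambda-\alpha_1)^{-1}\norm{Gx}$; then conclude via the second resolvent identity. All of these estimates are correct and coincide with the paper's.

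The genuine problem is the step you yourself flag as the hard part: to justify $(\lambda + A + (\ep+h)G)v = (\lambda + A + \ep G)v + hGv$ for $v = R_\lambda(\ep)x$, you invoke $\dom(G) \subseteq \dom(A)$. That inclusion is \emph{not} among the paper's hypotheses: it appears in the abstract and introduction only as a description of the motivating examples, while Proposition~\ref{prop:rc} is asserted for arbitrary linear maximal monotone $A$, $G$ satisfying Assumption~\ref{h:1}. As written, your argument therefore proves the statement only under an extra hypothesis. Note that the gap concerns only base points $\ep \in \mathopen]0,1]$: when $\ep = 0$ no inclusion is needed, since $v = R_\lambda x$ lies in $\dom(A)$ automatically, hence (after the Laplace-transform step) in $\dom(A) \cap \dom(G)$, where the closure of the sum acts as the literal sum --- which is all the paper's own proof needs there. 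For $\ep > 0$, writing $\overline{A+\ep G}$ for the closure, $v$ is only known to lie in $\dom(\overline{A+\ep G}) \cap \dom(G)$, and one must show that $\overline{A+(\ep+h)G}$ is defined at $v$ and acts as $\overline{A+\ep G} + hG$. This can be proved without any domain inclusion: for $h>0$ the operator $\overline{A+\ep G} + hG$, defined on $\dom(\overline{A+\ep G}) \cap \dom(G)$, is monotone, and its graph together with the graph of $\overline{A+(\ep+h)G}$ is again a monotone set (approximate any element of the graph of $\overline{A+(\ep+h)G}$ by elements of the graph of $A+(\ep+h)G$ on $\dom(A)\cap\dom(G)$, and use monotonicity of $\overline{A+\ep G}$ and of $G$ separately); maximality of $\overline{A+(\ep+h)G}$ then forces the inclusion $\overline{A+\ep G} + hG \subseteq \overline{A+(\ep+h)G}$. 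For $h<0$ one runs the identity with $\ep+h$ as base point instead, which suffices because your bound on $GR_\lambda(\cdot)$ is uniform in the parameter. (To be fair, the paper hides the same subtlety behind ``completely analogous''; but it does not import an unavailable hypothesis. A further minor point: boundedness of the restriction of $S_{A+\ep G}$ to $\dom(G)$ does not by itself give continuity of $t \mapsto GS_{A+\ep G}(t)x$; the strong continuity of the restriction is the content of Proposition~\ref{prop:gen}, whose proof is independent of this proposition, so it can be invoked without circularity.)
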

\begin{proof}
  Let $\phi \in \dom(G)$. One has, by assumption, $\norm{GS_A(t)\phi}
  \leq e^{\alpha_1 t} \norm{G\phi}$, which implies, recalling that $G$
  is closed, that, for every $\lambda>\alpha_1$,
  \[
  G R_\lambda \phi = \int_0^\infty e^{-\lambda t} GS_A(t)\phi\,dt,
  \]
  hence
  \[
  \norm{G R_\lambda \phi} \leq \int_0^\infty e^{-\lambda t}
  \norm{GS_A(t)\phi}\,dt \leq \frac{1}{\lambda-\alpha_1} \norm{G\phi}.
  \]
  The second resolvent identity yields
  \[
  R_\lambda(\ep)\phi - R_\lambda\phi = \ep R_\lambda(\ep) GR_\lambda \phi,
  \]
  where
  \[
  \norm{R_\lambda(\ep) GR_\lambda \phi} \leq
  \norm{GR_\lambda \phi} \leq \frac{1}{\lambda-\alpha_1}
  \norm{G\phi}.
  \]
  Therefore $R_\lambda(\ep) - R_\lambda$ converges pointwise to zero
  on $\dom(G)$, which is dense in $H$, and is bounded in $\cL(H)$
  uniformly with respect to $\ep \in [0,1]$. From this it follows that
  $R_\lambda(\ep)$ converges to $R_\lambda$ pointwise on $H$.
  Taking Proposition~\ref{prop:seh} into account, the proof that
  $R_\lambda(\ep+h)$ converges to $R_\lambda(\ep)$ pointwise in $H$ as
  $h \to 0$ is completely analogous.
\end{proof}

\begin{prop}
  \label{prop:gen}
  The restrictions of the semigroups $S_A$ and $S_{A + \ep G}$ on
  $\dom(G^k)$ are strongly continuous quasi-contraction semigroups
  thereon for every $k \in \{1,\ldots,m+1\}$.
\end{prop}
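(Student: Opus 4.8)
The plan is to check, for each $k \in \{1,\ldots,m+1\}$ and for each of the two semigroups, the three defining properties of a strongly continuous quasi-contraction semigroup on the Hilbert space $\dom(G^k)$: invariance of $\dom(G^k)$ together with the semigroup law, the quasi-contraction estimate, and strong continuity. I shall spell out the argument for $S_A$; that for $S_{A+\ep G}$ is obtained by the very same reasoning, invoking Proposition~\ref{prop:seh} in place of Assumption~\ref{h:1} and the strong continuity of $S_{A+\ep G}$ on $H$ in place of that of $S_A$.

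The first two properties are almost immediate. Since $\dom(G^k) \subseteq \dom(G^j)$ for $0 \leq j \leq k$, Assumption~\ref{h:1} already guarantees that $S_A(t)$ maps $\dom(G^k)$ into itself, and summing the estimates $\norm{G^j S_A(t)\phi} \leq e^{\alpha_j t}\norm{G^j\phi}$ over $j=0,\ldots,k$ (with $\alpha_0:=0$) yields
\[
  \norm[\big]{S_A(t)\phi}_{\dom(G^k)} \leq e^{\alpha t}
  \norm[\big]{\phi}_{\dom(G^k)}, \qquad
  \alpha := \max\{0,\alpha_1,\ldots,\alpha_k\},
\]
which is the quasi-contraction bound. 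The identities $S_A(0)=I$ and $S_A(t+s)=S_A(t)S_A(s)$ hold on $\dom(G^k)$ because they hold on $H$; together with the previous estimate they reduce strong continuity on $[0,\infty)$ to continuity at $t=0$ in the usual way.

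The one delicate point is strong continuity at the origin, i.e. $\norm{S_A(t)\phi - \phi}_{\dom(G^k)} \to 0$ as $t \to 0^+$. The obvious attempt, writing $G^j(S_A(t)\phi-\phi) = S_A(t)G^j\phi - G^j\phi$ and appealing to strong continuity of $S_A$ on $H$, is not available, because Assumption~\ref{h:1} provides only a norm bound and not the commutation of $G^j$ with $S_A$ (commutation of $S_A$ and $S_G$ being merely a special case of the hypothesis). Instead I would work intrinsically in the Hilbert space $\dom(G^k)$, combining weak convergence with convergence of norms. By the quasi-contraction estimate the family $\{S_A(t)\phi : t \in [0,1]\}$ is bounded in $\dom(G^k)$; since the embedding $\dom(G^k) \embed H$ is continuous and $S_A(t)\phi \to \phi$ strongly, hence weakly, in $H$, any weak limit point in $\dom(G^k)$ of a sequence $S_A(t_n)\phi$ with $t_n \to 0$ must equal $\phi$, whence $S_A(t)\phi \rightharpoonup \phi$ weakly in $\dom(G^k)$. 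On the other hand the same estimate gives $\limsup_{t \to 0^+}\norm{S_A(t)\phi}_{\dom(G^k)} \leq \norm{\phi}_{\dom(G^k)}$, while weak lower semicontinuity of the norm gives the matching lower bound for the $\liminf$; hence $\norm{S_A(t)\phi}_{\dom(G^k)} \to \norm{\phi}_{\dom(G^k)}$. In a Hilbert space weak convergence together with convergence of norms implies strong convergence, which is exactly the asserted strong continuity at $0$ and completes the argument.
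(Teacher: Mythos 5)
Your proof is correct, but it closes the argument differently from the paper at the decisive step. Both you and the paper first observe that quasi-contractivity on $\dom(G^k)$ is immediate from Assumption~\ref{h:1} (resp.\ Proposition~\ref{prop:seh}), and both establish weak continuity at $t=0$ by a weak-compactness argument; the difference there is cosmetic: the paper extracts weak limits of the components $G^jS(t_n)h$ in $H$ and identifies them via the strong--weak closedness of $G^j$, whereas you extract a weak limit directly in the Hilbert space $\dom(G^k)$ and identify it through the continuous embedding $\dom(G^k)\embed H$ (note that the completeness of $\dom(G^k)$, which underwrites your weak compactness, still rests on the closedness of the powers of $G$). The genuine divergence is in the upgrade from weak to strong continuity: the paper simply cites the nontrivial abstract theorem that a weakly continuous semigroup is strongly continuous (Engel--Nagel, p.~40), while you prove it by hand via the Radon--Riesz property, sandwiching $\norm{S_A(t)\phi}_{\dom(G^k)}$ between the weak lower semicontinuity bound and the $\limsup$ bound $e^{\alpha t}\norm{\phi}_{\dom(G^k)} \to \norm{\phi}_{\dom(G^k)}$. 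Your route is more elementary and self-contained, but it is worth noting what it costs: the sandwich works only because the quasi-contraction estimate has constant exactly $1$ in front of the exponential, as Assumption~\ref{h:1} provides; for a semigroup merely bounded by $Me^{\alpha t}$ with $M>1$ on $\dom(G^k)$, your $\limsup$ bound would degrade to $M\norm{\phi}_{\dom(G^k)}$ and the argument would collapse, whereas the paper's citation covers any locally bounded weakly continuous semigroup. Since the hypotheses here do give constant $1$, both proofs are valid.
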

\begin{proof}
  The family of operators $S_A$ and $S_{A+\ep G}$ are
  quasi-contractive endomorphisms of $\dom(G^k)$ for every
  $k \in \{1,\ldots,m+1\}$ by Assumption~\ref{h:1} and
  Proposition~\ref{prop:seh}, respectively.
  In the following we write $S$ to mean either $S_A$ or
  $S_{A + \ep G}$, as the argument is identical. Let us first prove
  weak continuity, i.e. that $\phi(S(t)h) \to \phi(h)$ for every
  $\phi \in \dom(G^k)'$, the dual of $\dom(G^k)$. By the Riesz
  representation theorem, for every $\phi \in \dom(G^k)'$ there exists
  $\ell_\phi \in \dom(G^k)$ such that
  \begin{equation}
    \label{eq:phili}
    \phi\colon h \mapsto \ip[\big]{\ell_\phi}{h}_{\dom(G^k)} =
    \ip[\big]{\ell_\phi}{h} + \ip[\big]{G\ell_\phi}{Gh} + \cdots +
    \ip[\big]{G^k\ell_\phi}{G^kh}.
  \end{equation}
  Let $j \in \{1,\ldots,k\}$ and $h \in \dom(G^j)$ be fixed. Thanks to
  Assumption~\ref{h:1} and Proposition~\ref{prop:seh}, $G^jS(t)h$ is
  bounded in $H$, hence there exists $\zeta \in H$ and a sequence
  $(t_n)$ converging to zero such that $G^jS(t_n)h$ converges weakly
  to $\zeta$ in $H$ as $n \to \infty$. Since $G^j$ is closed, hence
  also strongly-weakly closed, one has $\zeta = G^jh$. By uniqueness
  of the limit we conclude that $G^jS(t)h$ converges weakly to $G^jh$
  in $H$ as $t \to 0$. This immediately implies, in view of
  \eqref{eq:phili}, that $S(t)h \to h$ weakly in $\dom(G^k)$ as
  $t \to 0$. The proof is concluded recalling that weak continuity for
  semigroups implies strong continuity (see, e.g.,
  \cite[p.~40]{EnNa}).
\end{proof}

The convergence of resolvent of Proposition~\ref{prop:rc} continues to
hold also on $\dom(G^m)$, as we now show.
\begin{prop}
  \label{prop:core}
  Let $\ep \in [0,1]$ and $k \in \{1,\ldots,m\}$. The resolvent
  $R_\lambda(\ep+h)$ converges to $R_\lambda(\ep)$ pointwise on
  $\dom(G^k)$ as $h \to 0$ for every
  $\lambda>\alpha_k \vee \alpha_{k+1}$.
\end{prop}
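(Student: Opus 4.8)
The statement is to be read as convergence in the norm of $\dom(G^k)$; convergence in $H$ is already granted by Proposition~\ref{prop:rc} (which holds for $\lambda>\alpha_1$), so the content here is the upgrade of topology. The plan is to treat $R_\lambda(\ep)$ as an operator on the Hilbert space $\dom(G^k)$ and to reproduce the scheme of Proposition~\ref{prop:rc} at this higher level of regularity, the essential new input being the estimate of Proposition~\ref{prop:seh} one order above $k$. Throughout I may assume $\alpha_1\leq\cdots\leq\alpha_{m+1}$, since replacing $\alpha_k$ by $\max_{i\leq k}\alpha_i$ leaves Assumption~\ref{h:1} and Proposition~\ref{prop:seh} valid; then $\alpha_k\vee\alpha_{k+1}=\alpha_{k+1}$ dominates every $\alpha_i$ with $i\leq k+1$.

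First I would record that, by Proposition~\ref{prop:gen}, $S_{A+\ep G}$ restricts to a quasi-contraction semigroup on $\dom(G^k)$, so that the restriction of $R_\lambda(\ep)$ is exactly the resolvent of its generator, given by $R_\lambda(\ep)\psi=\int_0^\infty e^{-\lambda t}S_{A+\ep G}(t)\psi\,dt$. Using the closedness of each $G^j$ together with Proposition~\ref{prop:seh}, this yields $G^jR_\lambda(\ep)\psi=\int_0^\infty e^{-\lambda t}G^jS_{A+\ep G}(t)\psi\,dt$ and hence $\norm{G^jR_\lambda(\ep)\psi}\leq(\lambda-\alpha_j)^{-1}\norm{G^j\psi}$ for $\lambda>\alpha_j$. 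Summing over $j\in\{0,\ldots,k\}$ (with $\alpha_0:=0$) shows that $R_\lambda(\ep)$ is bounded on $\dom(G^k)$, uniformly in $\ep\in[0,1]$, as soon as $\lambda>\alpha_k$; this is the analogue of the uniform $\cL(H)$-bound used before.

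Next I would invoke the second resolvent identity $R_\lambda(\ep+h)-R_\lambda(\ep)=-h\,R_\lambda(\ep+h)GR_\lambda(\ep)$ and estimate its $\dom(G^k)$-operator norm on the dense subspace $\dom(G^{k+1})$. For $\phi\in\dom(G^{k+1})$ one has $\norm{GR_\lambda(\ep)\phi}_{\dom(G^k)}^2=\sum_{i=1}^{k+1}\norm{G^iR_\lambda(\ep)\phi}^2$, and the previous step (now used up to order $k+1$, which is legitimate precisely because $k\leq m$) bounds this by $C\norm{\phi}_{\dom(G^{k+1})}^2$ provided $\lambda>\alpha_{k+1}$. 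Combined with the uniform bound on $\norm{R_\lambda(\ep+h)}_{\cL(\dom(G^k))}$, this gives $\norm{R_\lambda(\ep+h)\phi-R_\lambda(\ep)\phi}_{\dom(G^k)}\leq C\abs{h}\norm{\phi}_{\dom(G^{k+1})}$, so the difference tends to zero on $\dom(G^{k+1})$. It remains to check that $\dom(G^{k+1})$ is dense in $\dom(G^k)$, which I would do via the Yosida-type map $\phi\mapsto n(n+G)^{-1}\phi$: since $(n+G)^{-1}$ commutes with $G$ and $G(n+G)^{-1}=I-n(n+G)^{-1}$ is bounded, this map sends $\dom(G^k)$ into $\dom(G^{k+1})$ and converges to the identity in the $\dom(G^k)$-norm. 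A standard three-$\varepsilon$ argument, using the uniform boundedness of $R_\lambda(\ep+h)$ on $\dom(G^k)$, then propagates the convergence from $\dom(G^{k+1})$ to all of $\dom(G^k)$.

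The main obstacle is that $G^k$ does not commute with $S_A$ (only the estimate of Assumption~\ref{h:1} is available), so one cannot reduce the claim to the $H$-statement of Proposition~\ref{prop:rc} by pulling $G^k$ through the resolvent; one is forced to work genuinely in $\dom(G^k)$ and to control every component $G^jR_\lambda(\ep)\phi$ simultaneously. The price is that bounding $GR_\lambda(\ep)\phi$ in the $\dom(G^k)$-norm calls upon the estimate one order higher, i.e.\ on $G^{k+1}$; this is exactly what restricts the statement to $k\leq m$ and forces the threshold $\lambda>\alpha_k\vee\alpha_{k+1}$ rather than merely $\lambda>\alpha_1$.
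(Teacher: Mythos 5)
Your proof is correct and follows essentially the same route as the paper's: the second resolvent identity $R_\lambda(\ep+h)-R_\lambda(\ep)=-h\,R_\lambda(\ep+h)GR_\lambda(\ep)$, estimated on the dense subspace $\dom(G^{k+1})$ by means of the bounds of Assumption~\ref{h:1} and Proposition~\ref{prop:seh} up to order $k+1$, combined with uniform boundedness of $R_\lambda(\ep)$ in $\cL(\dom(G^k))$ and a density argument. The only differences are matters of detail that the paper leaves implicit and you spell out --- the density of $\dom(G^{k+1})$ in $\dom(G^k)$ via Yosida approximants, and the control of the lower-order components $G^jR_\lambda(\ep)\phi$, $j<k$, via the harmless normalization $\alpha_1\leq\cdots\leq\alpha_{m+1}$, which trades the stated threshold $\alpha_k\vee\alpha_{k+1}$ for $\max_{i\leq k+1}\alpha_i$ (immaterial for the intended application, since strong resolvent convergence for large $\lambda$ is all that is used).
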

\begin{proof}
  The proof is analogous to that of Proposition~\ref{prop:rc}: for any
  $\phi \in \dom(G^{k+1})$ one infers, by Assumption~\ref{h:1}, that
  \[
    \norm{G^k R_\lambda \phi} \leq \frac{1}{\lambda-\alpha_k}
    \norm{G^k\phi}.
  \]
  The second resolvent identity implies
  \[
    G^kR_\lambda(\ep)\phi - G^kR_\lambda\phi
    = \ep G^kR_\lambda(\ep) GR_\lambda \phi,
  \]
  where, by Proposition~\ref{prop:seh},
  \[
    \norm{G^kR_\lambda(\ep) GR_\lambda \phi} \leq
    \frac{1}{\lambda-\alpha_k} \norm{G^{k+1}R_\lambda \phi} \leq
    \frac{1}{(\lambda-\alpha_k)(\lambda-\alpha_{k+1})}
    \norm{G^{k+1} \phi}.
  \]
  This implies that $R_\lambda(\ep) - R_\lambda$ converges pointwise
  to zero on $\dom(G^{k+1})$, which is dense in $\dom(G^k)$, and is
  bounded in $\cL(\dom(G^k))$ uniformly with respect to
  $\ep \in [0,1]$. Therefore $R_\lambda(\ep)$ converges to $R_\lambda$
  pointwise on $\dom(G^k)$ as $\ep \to 0$.
  Using Proposition~\ref{prop:seh} instead of Assumption~\ref{h:1}
  allows one to establish, by the same argument, that
  $R_\lambda(\ep+h)$ converges to $R_\lambda(\ep)$ pointwise in
  $\dom(G^k)$ as $h \to 0$ for every $\ep \in \mathopen]0,1]$.
\end{proof}

\subsection{Existence, uniqueness and convergence of mild solutions}
We assume throughout that the maps
\[
  f: \Omega \times [0,T] \times H \longto H, \qquad
  B: \Omega \times [0,T] \times H \longto \cL^2(U;H)
\]
are progressively measurable (in the sense that they are measurable
with respect to the product $\sigma$-algebra
$\mathscr{R} \otimes \mathscr{B}(H)$, where $\mathscr{R}$ stands for
the $\sigma$-algebra of progressively measurable subsets of
$\Omega \times [0,T]$) and they satisfy the Lipschitz continuity
condition
\[
  \norm[\big]{f(\omega,t,x) - f(\omega,t,y)} 
  + \norm[\big]{B(\omega,t,x) - B(\omega,t,y)}_{\cL^2(U;H)}
  \lesssim \norm{x-y}
\]
for all $(\omega,t) \in \Omega \times [0,T]$, with implicit constant
independent of $\omega$ and $t$.
It is well known that if, for a $p>0$,
$u_0 \in L^p(\Omega,\mathscr{F}_0;H)$ and there exists $a \in H$ such
that
\begin{equation}
  \label{eq:faba}
  f(a) \in L^p(\Omega;L^1(0,T;H)), \qquad
  B(a) \in L^p(\Omega;L^2(0,T;\cL^2(U;H))),
\end{equation}
then the stochastic evolution equation on $H$
\begin{equation}
  \label{eq:a}
  du + Au\,dt = f(u)\,dt + B(u)\,dW, \qquad u(0)=u_0,
\end{equation}
admits a unique mild solution $u \in \sC^p$ and the solution map
$u_0 \mapsto u$ is Lipschitz continuous from $L^p(\Omega;H)$ to
$\sC^p$ (see, e.g., \cite{DP:K,cm:SIMA18}).

An important tool for the developments in the following sections is a
more precise continuous dependence result for the mild solution to
\eqref{eq:a} on its data. In particular, let us consider the family of
equations indexed by $n \in \mathbb{N}$
\begin{equation}
  \label{eq:un}
  du_n + A_nu_n\,dt = f_n(u_n)\,dt + B_n(u_n)\,dW,
  \qquad u_n(0)=u_{0n},
\end{equation}
where $A_n$ is a linear maximal monotone operator on $H$, $u_{0n} \in
L^0(\Omega,\mathscr{F}_0;H)$, and the maps
\[
  f_n: \Omega \times [0,T] \times H \longto H, \qquad
  B_n: \Omega \times [0,T] \times H \longto \cL^2(U;H)
\]
satisfy the same measurability conditions satisfied by $f$ and $B$,
respectively, and are Lipschitz continuous in their third argument,
uniformly over $\Omega \times [0,T]$ as well as with respect to $n$,
i.e. there exist positive constants $N_1$ and $N_2$, not depending on
$\omega$, $t$ and $n$, such that
\begin{gather*}
  \norm[\big]{f_n(\omega,t,x) - f_n(\omega,t,y)} \leq N_1 \norm{x-y},\\
  \norm[\big]{B_n(\omega,t,x) - B_n(\omega,t,y)}_{\cL^2(U;H)}
  \leq N_2 \norm{x-y}
\end{gather*}
for all $(\omega,t,n) \in \Omega \times [0,T] \times
\mathbb{N}$. Therefore, for any $p>0$, if $f_n$ and $B_n$ satisfy
condition \eqref{eq:faba} and
$u_{0n} \in L^p(\Omega,\mathscr{F}_0;H)$, there exists a unique mild
solution $u_n \in \sC^p$ to \eqref{eq:un}.

\begin{thm}
  \label{thm:str}
  Let $p>0$. Assume that $f$, $B$ as well as $f_n$ and $B_n$ satisfy
  condition \eqref{eq:faba} for every $n \in \mathbb{N}$. Furthermore,
  assume that, as $n \to \infty$, $A_n \to A$ in the strong resolvent
  sense, $f_n(\omega,t,\cdot)$ and $B_n(\omega,t,\cdot)$ converge to
  $f(\omega,t,\cdot)$ and $B(\omega,t,\cdot)$ pointwise in $H$ and
  $\cL^2(U;H)$, respectively, for all $(\omega,t) \in \Omega \times
  [0,T]$, and $u_{0n} \to u_0$ in $L^p(\Omega,\mathscr{F}_0;H)$. Then
  $u_n \to u$ in $\sC^p$.
\end{thm}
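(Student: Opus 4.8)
The plan is to reduce the simultaneous perturbation of operator, coefficients and initial datum to two comparisons, each isolating one source of variation. I would introduce the intermediate process $v_n$, defined as the mild solution of
\[
  dv_n + A_n v_n\,dt = f(v_n)\,dt + B(v_n)\,dW, \qquad v_n(0) = u_0,
\]
that is, the equation carrying the \emph{approximating} operator $A_n$ but the \emph{limiting} coefficients $f$, $B$ and datum $u_0$. Writing $u_n - u = (u_n - v_n) + (v_n - u)$, the first difference is governed purely by the perturbation of $(f,B,u_0)$ at fixed operator $A_n$, while the second is governed purely by the perturbation of the operator at fixed data. Note that $\sup_n \norm{v_n}_{\sC^p} < \infty$ is immediate, since $v_n$ carries fixed data and $\norm{S_{A_n}(t)} \leq 1$ uniformly.

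I would treat $v_n \to u$ first, which I expect to be the crux. Since $A_n \to A$ in the strong resolvent sense and all $S_{A_n}$, $S_A$ are contraction semigroups, the Trotter--Kato approximation theorem (cf.~\cite{EnNa}) gives $S_{A_n}(t)x \to S_A(t)x$ in $H$, uniformly for $t \in [0,T]$, for every $x \in H$. Subtracting the mild formulations and splitting $S_{A_n}f(v_n) - S_A f(u) = S_{A_n}(f(v_n)-f(u)) + (S_{A_n}-S_A)f(u)$ (and likewise for $B$), one obtains
\[
  v_n(t) - u(t) = E_n(t)
  + \int_0^t S_{A_n}(t-s)\bigl(f(v_n)-f(u)\bigr)(s)\,ds
  + \int_0^t S_{A_n}(t-s)\bigl(B(v_n)-B(u)\bigr)(s)\,dW(s),
\]
where the forcing term collects the operator discrepancies,
\[
  E_n(t) := \bigl(S_{A_n}(t)-S_A(t)\bigr)u_0
  + \int_0^t \bigl(S_{A_n}(t-s)-S_A(t-s)\bigr) f(u(s))\,ds
  + \int_0^t \bigl(S_{A_n}(t-s)-S_A(t-s)\bigr) B(u(s))\,dW(s).
\]
Reading this as the mild equation for $v_n - u$ with operator $A_n$, additive adapted forcing $E_n$, and the Lipschitz coefficients $x \mapsto f(x+u)-f(u)$, $x \mapsto B(x+u)-B(u)$ (which vanish at $x=0$), the a priori estimate underlying the fixed-point construction --- with a constant depending only on $T$ and the Lipschitz constants of $f$, $B$, thanks to $\norm{S_{A_n}(t)}\leq 1$ --- yields $\norm{v_n-u}_{\sC^p} \lesssim \norm{E_n}_{\sC^p}$, uniformly in $n$. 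It remains to show $\norm{E_n}_{\sC^p}\to 0$. The first two summands vanish by the uniform-on-compacts semigroup convergence and dominated convergence (the Bochner integrand is dominated by $2\norm{f(u(s))}$, integrable since $f(u)\in L^p(\Omega;L^1(0,T;H))$). The stochastic convolution is the delicate summand: estimating it in $\sC^p$ by the factorization method and the maximal inequality for stochastic convolutions driven by contraction semigroups (as in \cite{cm:SIMA18}) reduces the matter to showing that $\mathbb{E}\int_0^T \norm{Y_n(s)}^q\,ds \to 0$, where $Y_n(s) := \int_0^s (s-r)^{-\alpha}\bigl(S_{A_n}(s-r)-S_A(s-r)\bigr)B(u(r))\,dW(r)$ for a suitable $\alpha \in (0,1/2)$ and $q$ large. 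By the Burkholder--Davis--Gundy inequality this follows from the pointwise convergence $\norm[\big]{\bigl(S_{A_n}(s-r)-S_A(s-r)\bigr)B(u(r))}_{\cL^2(U;H)} \to 0$ --- itself obtained by dominated convergence over an orthonormal basis $(e_k)$ of $U$, each $\bigl(S_{A_n}-S_A\bigr)(s-r)B(u(r))e_k \to 0$ being dominated by $2\norm{B(u(r))e_k}$ --- together with the domination $(s-r)^{-2\alpha}\norm[\big]{\bigl(S_{A_n}(s-r)-S_A(s-r)\bigr)B(u(r))}_{\cL^2(U;H)}^2 \leq 4(s-r)^{-2\alpha}\norm[\big]{B(u(r))}_{\cL^2(U;H)}^2$, integrable over $0<r<s<T$ and $\Omega$ since $B(u)\in L^p(\Omega;L^2(0,T;\cL^2(U;H)))$. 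Hence $\norm{E_n}_{\sC^p}\to 0$ and $v_n \to u$ in $\sC^p$.

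For the second comparison, $u_n - v_n$ involves the \emph{same} operator $A_n$ but the coefficients $(f_n,B_n)$ versus $(f,B)$ and data $u_{0n}$ versus $u_0$. Applying the continuous-dependence estimate for mild solutions at the fixed operator $A_n$ --- again with a constant uniform in $n$ by contractivity and the uniform Lipschitz bounds $N_1$, $N_2$ --- gives
\[
  \norm{u_n-v_n}_{\sC^p} \lesssim \norm{u_{0n}-u_0}_{L^p(\Omega;H)}
  + \norm[\big]{(f_n-f)(v_n)}_{L^p(\Omega;L^1(0,T;H))}
  + \norm[\big]{(B_n-B)(v_n)}_{L^p(\Omega;L^2(0,T;\cL^2(U;H)))},
\]
the coefficient differences being evaluated along $v_n$. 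The datum term vanishes by hypothesis. For the coefficient terms I would split $(f_n-f)(v_n) = \bigl[(f_n-f)(v_n)-(f_n-f)(u)\bigr] + (f_n-f)(u)$: the bracket is bounded by $(N_1+\mathrm{Lip}\,f)\norm{v_n-u}$ and tends to zero in $L^p(\Omega;L^1(0,T;H))$ since $v_n \to u$ in $\sC^p$ by the first step, while $(f_n-f)(u) \to 0$ in the same space by pointwise convergence $f_n \to f$ evaluated along the \emph{fixed} process $u$ and dominated convergence, the dominating function coming from the uniform Lipschitz bound and \eqref{eq:faba} (together with $f_n(a)\to f(a)$); the $B$-term is identical. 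Combining the two comparisons gives $u_n \to u$ in $\sC^p$.

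The main obstacle is the first comparison and, within it, the $\sC^p$-control of the stochastic convolution carrying the operator discrepancy: strong resolvent convergence yields only strong, not operator-norm, convergence of the semigroups, so $S_{A_n}-S_A$ cannot be factored out, and the supremum-in-time norm taken in $L^p(\Omega)$ forces the combination of a maximal inequality with a twofold dominated-convergence argument (over the orthonormal basis of $U$ and over $\Omega$ and the simplex $0<r<s<T$). Everything else reduces to the uniform-in-$n$ well-posedness estimates already available for contraction semigroups.
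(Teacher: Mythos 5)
The paper offers no proof of this theorem: it is declared ``entirely similar'' to that of \cite[Theorem~2.4]{cm:JFA13} and omitted, so your argument can only be measured against that standard scheme. Your architecture --- the intermediate process $v_n$, uniform-in-$n$ a priori estimates from contractivity, Trotter--Kato for the operator perturbation, dominated convergence for the coefficients --- is the natural one and sound in outline. But the step you yourself call the crux contains a genuine error: the factorization method cannot be applied to the kernel $S_{A_n}-S_A$, because the factorization identity rests on the semigroup law $S(t-s)S(s-r)=S(t-r)$, which a difference of two semigroups does not satisfy. Factorizing the two convolutions separately and subtracting shows that, besides the term built from your $Y_n$ (reconstructed through $S_{A_n}$), there is a cross term
\[
  \int_0^t (t-s)^{\alpha-1}\bigl(S_{A_n}(t-s)-S_A(t-s)\bigr)\,Y(s)\,ds,
  \qquad Y(s):=\int_0^s (s-r)^{-\alpha}S_A(s-r)B(u(r))\,dW(r),
\]
which your reduction to $\mathbb{E}\int_0^T\norm{Y_n(s)}^q\,ds\to0$ does not account for (it does tend to zero, by the same Trotter--Kato plus dominated convergence ingredients, so this is reparable). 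A second, related gap is integrability: you take ``$q$ large'', but the hypotheses give only $B(u)\in L^p(\Omega;L^2(0,T;\cL^2(U;H)))$ with $p>0$ arbitrary, possibly smaller than one, and only $L^2$-integrability in time, so the quantities $\mathbb{E}\int_0^T\norm{Y_n(s)}^q\,ds$ need not even be finite. One must first exploit the maximal inequality for contraction-semigroup convolutions --- whose constant is uniform in $n$, by the triangle inequality and contractivity --- to reduce to integrands in a dense class of bounded elementary processes (or localize with stopping times), and only then run the factorization argument for fixed such integrand.

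There is also a gap in your second comparison, though it traces back to the statement itself. You need $(f_n-f)(u)\to0$ in $L^p(\Omega;L^1(0,T;H))$ and claim the dominating function comes from the uniform Lipschitz bound, \eqref{eq:faba} and $f_n(a)\to f(a)$; no such dominating function follows from the stated hypotheses, since \eqref{eq:faba} is assumed for each $n$ separately and pointwise convergence carries no uniform integrability. In fact the theorem as literally stated is false: take $H=\erre$, $A_n=A=0$, $B_n=B=0$, $u_{0n}=u_0=0$ and $f_n(\omega,t,x)=n\mathbf{1}_{(1/(2n),\,1/n)}(t)$. Every hypothesis holds (Lipschitz constant $0$, $f=0$, pointwise convergence at every $(\omega,t)$ since each $t>0$ eventually lies outside $(1/(2n),1/n)$, and $\norm{f_n}_{L^1(0,T)}=1/2$), yet the mild solution satisfies $u_n(1/n)=1/2$ for all $n\geq 1/T$, so $u_n\not\to u=0$ in $\sC^p$. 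The hypothesis actually needed --- and the one under which the deferred proof and the paper's own applications of this theorem operate --- is convergence of the coefficients in the data norms, e.g.\ $f_n(\cdot,\cdot,x)\to f(\cdot,\cdot,x)$ in $L^p(\Omega;L^1(0,T;H))$ for each $x$ (similarly for $B_n$), or pointwise convergence plus an integrable dominating function; with that, your dominated-convergence step (after approximating $u$ by simple processes) goes through. As written, however, your proof asserts a domination that does not exist.
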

The proof is entirely similar to that of \cite[Theorem~2.4]{cm:JFA13}
and hence omitted (cf. also \cite{KvN2} where, however, $A_n$ as well
as $A$ have to be negative generators of holomorphic semigroups).

We are now in the position to state and prove the well-posedness and
convergence results for \eqref{eq:ep} needed in the following
sections. To this purpose, we shall say that $f$ and $B$ satisfy
condition $\mathrm{H}(m,p)$, $m \in \mathbb{N}$, $p>0$, if
\[
  f: \Omega \times [0,T] \times H \longto \dom(G^m), \qquad
  B: \Omega \times [0,T] \times H \longto \cL^2(U;\dom(G^m))
\]
are progressively measurable and they fulfill the Lipschitz continuity
condition
\begin{equation}
  \label{eq:lipa}
  \norm[\big]{f(\omega,t,x) - f(\omega,t,y)}_{\dom(G^m)}
  + \norm[\big]{B(\omega,t,x) - B(\omega,t,y)}_{\cL^2(U;\dom(G^m))}
  \lesssim \norm{x-y},
\end{equation}
with implicit constant independent of $\omega$ and $t$, and there
exists $a \in H$ such that \eqref{eq:faba} holds with $H$ replaced by
$\dom(G^{m})$.

\begin{prop}
  \label{prop:wpc}
  Let $p>0$ and $m \in \mathbb{N}$. Assume that $f$ and $B$ satisfy
  assumption $\mathrm{H}(m,p)$ and $u_0 \in
  L^p(\Omega,\mathscr{F}_0;\dom(G^m))$. Then equation \eqref{eq:ep}
  admits a unique mild solution $u_\ep \in \sC^p(\dom(G^m))$ for every
  $\ep \in [0,1]$. Moreover, $u_{\ep+h}$ converges to $u_\ep$ in
  $\sC^p(\dom(G^m))$ as $h \to 0$ for every $\ep \in [0,1]$.
\end{prop}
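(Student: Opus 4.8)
The plan is to transfer the abstract well-posedness and continuous-dependence theory recalled above for equations of type \eqref{eq:a} on $H$ to the Hilbert space $\dom(G^m)$, the decisive analytic facts having already been established in the preliminaries. By Proposition~\ref{prop:gen} the negative generator $A+\ep G$ restricts to the generator of a strongly continuous quasi-contraction semigroup on $\dom(G^m)$, and by Proposition~\ref{prop:seh} together with the consequence of Assumption~\ref{h:1} recorded thereafter, this restriction satisfies $\norm{S_{A+\ep G}(t)\phi}_{\dom(G^m)} \leq e^{\alpha t}\norm{\phi}_{\dom(G^m)}$ with $\alpha$ independent of $\ep \in [0,1]$. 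As the cited well-posedness statement for \eqref{eq:a} is phrased for a maximal monotone operator, I would first absorb the quasi-contractivity into the drift by the usual shift, writing \eqref{eq:ep} equivalently as
\[
  du_\ep + (A+\ep G+\alpha)u_\ep\,dt = \bigl(f(u_\ep)+\alpha u_\ep\bigr)\,dt + B(u_\ep)\,dW,
  \qquad u_\ep(0)=u_0,
\]
where $A+\ep G+\alpha$, regarded as an operator on $\dom(G^m)$, is maximal monotone, uniformly in $\ep$.

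It then remains to verify that, on $\dom(G^m)$ in place of $H$, the data of the shifted equation meet the hypotheses of the well-posedness result. Assumption $\mathrm{H}(m,p)$ supplies the Lipschitz estimate \eqref{eq:lipa}, and since $\norm{\cdot}\leq\norm{\cdot}_{\dom(G^m)}$ the maps $f$ and $B$ are a fortiori Lipschitz continuous as maps on the Hilbert space $\dom(G^m)$; the modified drift $x\mapsto f(x)+\alpha x$ inherits this. The integrability requirement \eqref{eq:faba} on $\dom(G^m)$ is part of $\mathrm{H}(m,p)$ and, for the shifted system, holds with the origin as common anchor: there the modified drift reduces to $f(0)$, which together with $B(0)$ lies in the required space by the Lipschitz continuity of $f$ and $B$ into $\dom(G^m)$ and $\cL^2(U;\dom(G^m))$ and the validity of \eqref{eq:faba} for some $a\in H$. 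Since $u_0 \in L^p(\Omega,\mathscr{F}_0;\dom(G^m))$ by hypothesis, applying the well-posedness result for \eqref{eq:a} verbatim on $\dom(G^m)$ yields a unique mild solution $u_\ep \in \sC^p(\dom(G^m))$ for every $\ep \in [0,1]$.

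For the continuous dependence I would fix $\ep \in [0,1]$ and reduce, in the standard way, to proving $u_{\ep+h_n}\to u_\ep$ in $\sC^p(\dom(G^m))$ along an arbitrary sequence $h_n\to 0$ with $\ep+h_n\in[0,1]$. This is precisely the setting of Theorem~\ref{thm:str} on the base space $\dom(G^m)$, applied with $A_n := A+(\ep+h_n)G+\alpha$ --- all maximal monotone on $\dom(G^m)$ through the common shift $\alpha$ --- and with the coefficients $x\mapsto f(x)+\alpha x$, $B$ and the datum $u_0$ held fixed, so that the required convergences of the coefficients and of the initial data are trivial. The only substantive hypothesis left to check is that $A_n$ converges to $A+\ep G+\alpha$ in the strong resolvent sense on $\dom(G^m)$; as the resolvent of the shifted operator at $\lambda$ equals $R_{\lambda+\alpha}(\ep+h_n)$, this is exactly the pointwise convergence on $\dom(G^m)$ furnished by Proposition~\ref{prop:core} with $k=m$, valid as soon as $\lambda+\alpha>\alpha_m\vee\alpha_{m+1}$. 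Theorem~\ref{thm:str} then delivers $u_{\ep+h_n}\to u_\ep$ in $\sC^p(\dom(G^m))$, and the arbitrariness of the sequence gives the convergence as $h\to 0$.

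The argument is thus essentially bookkeeping, the real content being already packaged in the analytic preliminaries. The one point I expect to demand genuine care is the uniformity in $\ep$ of the quasi-contraction constant $\alpha$, and hence of the maximal monotonicity of the shifted operators $A+\ep G+\alpha$: Theorem~\ref{thm:str} requires all the $A_n$ to be maximal monotone with respect to one and the same shift, which is guaranteed precisely by the $\ep$-independence of the constants $\alpha_k$ in Proposition~\ref{prop:seh}.
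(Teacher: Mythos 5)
Your proof is correct and follows essentially the same route as the paper: well-posedness by transferring the standard theory for \eqref{eq:a} to the Hilbert space $\dom(G^m)$ via Proposition~\ref{prop:gen} and the observation that \eqref{eq:lipa} makes $f$ and $B$ Lipschitz on $\dom(G^m)$, and continuous dependence via Proposition~\ref{prop:core} combined with Theorem~\ref{thm:str}. The extra details you supply (the shift by $\alpha$ to reduce quasi-contractions to the maximal monotone setting, the choice of anchor point, and the identification of the shifted resolvent) are routine points the paper leaves implicit.
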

\begin{proof}
  Well-posedness is a consequence of Proposition~\ref{prop:gen}, upon
  observing that \eqref{eq:lipa} holds also when the norm on the
  right-hand side is replaced by the norm of $\dom(G^m)$. 
  The convergence of $u_{\ep+h}$ to $u_\ep$ in $\sC^p(\dom(G^m))$ as
  $h \to 0$ follows by Proposition~\ref{prop:core} and
  Theorem~\ref{thm:str}.
\end{proof}

\section{Equations for formal derivatives}
\label{sec:fd}
The purpose of this section is to establish well-posedness in
appropriate spaces of the linear stochastic evolution equations
obtained by \emph{formally} differentiating \eqref{eq:ep} with respect
to the parameter $\ep$. The formal $n$-th derivative of
$\delta \mapsto u_\delta$ at the point $\ep$ will be denoted by
$u_\ep^k$. Derivatives of $f$ and $B$ are always meant with
respect to their third argument. First order formal differentiation
yields
\begin{equation}
  \label{eq:uu}
  du^1_\ep + (A + \ep G)u^1_\ep\,dt + Gu_\ep
  = f'(u_\ep)u^1_\ep\,dt + B'(u_\ep)u^1_\ep\,dW,
  \qquad u^1_\ep(0)=0.
\end{equation}
In order to write the equations obtained by higher-order formal
derivatives, we observe that, for any function $\ep \mapsto g_\ep \in
C^\infty([0,1];\dom(G))$, one has,
\[
  \bigl((A+\ep G) g_\ep\bigr)^{(n)} =
  \sum_{j=0}^\infty \binom{n}{j} (A+\ep G)^{(j)}
  g_\ep^{(n-j)} = (A+\ep G)g_\ep^{(n)} +
  nGg_\ep^{(n-1)},
\]
as well as
\begin{align*}
  \bigl( f(g_\ep) \bigr)^{(n)} &= f'(g_\ep)
  g_\ep^{(n)} + r(f,g;n-1,\ep),\\
    \bigl( B(g_\ep) \bigr)^{(n)} &= B'(g_\ep)
  g_\ep^{(n)} + r(B,g;n-1,\ep),
\end{align*}
where $r(B,g;n-1,\ep)$ is a finite linear combination of terms
of the type
\[
  B^{(j)}(g_\ep)\bigl(
  g_\ep^{(n_1)},\ldots,g_\ep^{(n_j)}\bigr),
\]
with $j \in \{2,\ldots,n\}$, $n_1+\cdots+n_j=n$,
$n_1,\ldots,n_j \geq 1$.\footnote{These terms can be written in a more
  explicit way by the formula for the derivatives of higher order of a
  composite function (see, e.g., \cite[p.~272]{Bog:TVS}), but it is
  not important for our purposes.} In particular, note that
$r(B,g;n-1,\ep)$ depends only on terms of order at most $n-1$
and does not involve $G$.  Completely similar considerations clearly
hold also for $r(f,g;n-1,\ep)$.
Therefore one has, for every integer $n \geq 1$,
\begin{equation}
  \label{eq:uen}
  \begin{split}
  &du^n_\ep + (A+\ep G)u^n_\ep\,dt
    + nGu^{n-1}_\ep\,dt\\
  &\hspace{3em} = f'(u_\ep)u^n_\ep\,dt
    + \varphi_n(u_\ep)\,dt %
    + B'(u_\ep)u^n_\ep\,dW + \Phi_n(u_\ep)\,dW,\\
  &u^n_\ep(0)=0,
  \end{split}
\end{equation}
where $\varphi_n(u_\ep)$ and $\Phi_n(u_\ep)$ are obtained by
$r(\cdot,\cdot;n-1,\ep)$ in the obvious way.

Assuming that $\mathrm{H}(m,p)$ holds for a fixed $m \geq 1$, we are
going to show that, given any $n \in \{1,\cdots,m\}$, \eqref{eq:uen}
admits a unique mild solution $u^n \in \sC^p(\dom(G^{m-n}))$ under
suitable differentiability assumptions on $f$ and $B$.
Let us begin with the equation for first-order formal
derivatives. From now on we shall make the
\begin{hyp}
\label{h:d1}
  The maps
  \begin{align*}
    f(\omega,t,\cdot) \colon H
    &\longto \dom(G^{m-1}),\\
    B(\omega,t,\cdot) \colon H
    &\longto \cL_j(H;\cL^2(U;\dom(G^{m-1})))
  \end{align*}
  are of class $C^1$ for every $(\omega,t) \in \Omega \times [0,T]$.
\end{hyp}
The derivatives of $f$ and $B$ in the sense of this assumption will be
denoted by $f'$ and $B'$, respectively.  Note that the Lipschitz
continuity of $f$ and $B$ in \eqref{eq:lipa} implies that $f$ and $B$ in
this assumption are in fact of class $C^1_b$, not just $C^1$. More
explicitly, the uniform Lipschitz continuity of $B(\omega,t)$ from $H$
to $\cL^2(U;\dom(G^m))$ implies its uniform Lipschitz continuity, with
the same Lipschitz constant, also with codomain
$\cL^2(U;\dom(G^{m-1}))$, as $\dom(G^m)$ is clearly contractively
embedded in $\dom(G^{m-1})$. Therefore the norm in
$\cL_j(H;\cL^2(U;\dom(G^{m-1})))$ of $B'$ is bounded by its Lipschitz
constant, uniformly with respect to
$(\omega,t) \in \Omega \times [0,T]$. Completely similar
considerations hold for $f$.

\begin{prop}
  \label{prop:ufo}
  Let $p>0$, $u_0 \in L^p(\Omega,\mathscr{F}_0;H)$ and assumption
  $\mathrm{H}(m,p)$ be satisfied. Then equation \eqref{eq:uu} admits a
  unique mild solution $u^1_\ep \in \sC^p(\dom(G^{m-1}))$ for all
  $\ep \in [0,1]$.
\end{prop}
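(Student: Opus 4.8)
The plan is to prove this as a fixed-point argument in the Banach space $\sC^p(\dom(G^{m-1}))$, treating \eqref{eq:uu} as a linear stochastic evolution equation driven by the semigroup $S_{A+\ep G}$. The key observation is that \eqref{eq:uu} is genuinely linear in the unknown $u^1_\ep$: rewriting it in mild form,
\[
  u^1_\ep(t) = -\int_0^t S_{A+\ep G}(t-s) Gu_\ep(s)\,ds
  + \int_0^t S_{A+\ep G}(t-s) f'(u_\ep(s))u^1_\ep(s)\,ds
  + \int_0^t S_{A+\ep G}(t-s) B'(u_\ep(s))u^1_\ep(s)\,dW(s),
\]
the first integral is a fixed \emph{inhomogeneous} forcing term, while the remaining two integrals are linear in $u^1_\ep$ with coefficients $f'(u_\ep)$ and $B'(u_\ep)$ that, by Assumption~\ref{h:d1} and the remark following it, are bounded as operators into $\dom(G^{m-1})$ uniformly in $(\omega,t)$. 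First I would invoke Proposition~\ref{prop:wpc} (with the given $m$) to guarantee that $u_\ep \in \sC^p(\dom(G^m))$, so that $Gu_\ep \in \sC^p(\dom(G^{m-1}))$ and the forcing term is well defined in the target space.

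The main technical point is to verify that the forcing term $t \mapsto \int_0^t S_{A+\ep G}(t-s) Gu_\ep(s)\,ds$ lies in $\sC^p(\dom(G^{m-1}))$. Here I would use Proposition~\ref{prop:gen}, which states that $S_{A+\ep G}$ restricts to a strongly continuous quasi-contraction semigroup on $\dom(G^{m-1})$ (since $m-1 \leq m+1$), together with Proposition~\ref{prop:seh}, which gives the quantitative bound $\norm{G^{m-1}S_{A+\ep G}(t-s)Gu_\ep(s)} \leq e^{\alpha_{m-1}(t-s)}\norm{G^{m-1}Gu_\ep(s)} = e^{\alpha_{m-1}(t-s)}\norm{G^m u_\ep(s)}$, and analogously for the lower-order powers. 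This shows the deterministic convolution is a bounded map from $\sC^p(\dom(G^{m-1}))$-valued forcing into $\sC^p(\dom(G^{m-1}))$, so the forcing indeed satisfies the analogue of the data condition \eqref{eq:faba} on $\dom(G^{m-1})$.

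With the forcing term placed in the correct space, the equation \eqref{eq:uu} is of exactly the form \eqref{eq:a} posed on the Hilbert space $\dom(G^{m-1})$ in place of $H$: the generator is $-(A+\ep G)$ acting on $\dom(G^{m-1})$ (legitimate by Proposition~\ref{prop:gen}), the drift coefficient is the affine map $v \mapsto f'(u_\ep)v$ plus the fixed forcing, and the diffusion coefficient is $v \mapsto B'(u_\ep)v$. Both are globally Lipschitz in $v$ with respect to the $\dom(G^{m-1})$-norm, uniformly in $(\omega,t)$, by the uniform bound on $f'$ and $B'$ noted after Assumption~\ref{h:d1}; the data condition \eqref{eq:faba} on $\dom(G^{m-1})$ holds with $a=0$ since $f'(u_\ep)\cdot 0 = 0$ and the forcing is already shown to be integrable. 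I would then apply the standard well-posedness result quoted after \eqref{eq:faba} (existence and uniqueness of a mild solution in $\sC^p$, here $\sC^p(\dom(G^{m-1}))$) to conclude.

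The step I expect to be the main obstacle is the careful verification that the deterministic convolution of $Gu_\ep$ against $S_{A+\ep G}$ genuinely takes values in $\dom(G^{m-1})$ with continuous paths and finite $L^p(\Omega)$-norm — in particular that one may commute $G^{m-1}$ with the semigroup inside the Bochner integral, which requires the closedness of $G^{m-1}$ together with the uniform estimate from Proposition~\ref{prop:seh} to justify passing the operator under the integral sign. Once this is secured, the fixed-point/contraction machinery is entirely routine and the remaining coefficients present no difficulty because the equation is linear and the Lipschitz constants are uniform.
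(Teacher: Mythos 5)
Your proposal is correct and follows essentially the same route as the paper's proof: Proposition~\ref{prop:wpc} gives $u_\ep \in \sC^p(\dom(G^m))$ so that $Gu_\ep$ is an admissible $\dom(G^{m-1})$-valued forcing term, Proposition~\ref{prop:gen} lets $S_{A+\ep G}$ act as a strongly continuous quasi-contraction semigroup on $\dom(G^{m-1})$, and the uniform bounds on $f'$ and $B'$ noted after Assumption~\ref{h:d1} make the linear coefficients uniformly Lipschitz. The only cosmetic difference is that the paper concludes by applying Proposition~\ref{prop:wpc} again, with the affine coefficients $x \mapsto f'(u_\ep)x - Gu_\ep$ and $x \mapsto B'(u_\ep)x$ satisfying $\mathrm{H}(m-1,p)$, whereas you pose the equation directly on the Hilbert space $\dom(G^{m-1})$ and invoke the basic well-posedness result for \eqref{eq:a} --- which is exactly how Proposition~\ref{prop:wpc} is itself proved, so the two formulations coincide.
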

\begin{proof}
  By Proposition~\ref{prop:gen} $S_{A + \ep G}$ is a strongly
  continuous semigroup of quasi-contraction on $\dom(G^{m-1})$ for all
  $\ep \in [0,1]$, and by Proposition~\ref{prop:wpc} there exists a
  unique mild solution $u_\ep \in \sC^p(\dom(G^m))$ to
  \eqref{eq:ep}. The latter implies that
  $Gu_\ep \in L^p(\Omega;L^1(0,T;\dom(G^{m-1})))$. Moreover, since
  $f'(u_\ep)$ and $B'(u_\ep)$ satisfy assumption $\mathrm{H}(m-1,p)$,
  the claim follows by Proposition~\ref{prop:wpc}.
\end{proof}

Now are now going to establish well-posedness for \eqref{eq:uen}. We
need further differentiability assumptions on $f$ and $B$, that will
be in force throughout the rest of this section.
\begin{hyp}
  \label{h:dh}
  For each $j \in \{1,m-1\}$, the functions
  \begin{align*}
    f^{(j)}(\omega,t,\cdot) \colon H
    &\longto \dom(G^{m-j-1}),\\
    B^{(j)}(\omega,t,\cdot) \colon H
    &\longto \cL_j(H;\cL^2(U;\dom(G^{m-j-1})))
  \end{align*}
  are of class $C^1_b$ for every $(\omega,t) \in \Omega \times [0,T]$,
  with derivative bounded uniformly with respect to $(\omega,t)$.
\end{hyp}
\noindent Note that, as a consequence of this assumption,
\[
  B^{(m)} (\omega,t,\cdot)\colon H \longrightarrow \cL_m(H;\cL^2(U;H))
\]
is continuous and bounded uniformly with respect to $(\omega,t)$.

\begin{thm}
  Let $n \in \{1,\ldots,m\}$ and $p>0$. If $u_0 \in
  L^p(\Omega,\mathscr{F}_0;\dom(G^m))$, then \eqref{eq:uen} admits a
  unique mild solution $u_\ep^n \in \sC^{p/n}(\dom(G^{m-n})$.
\end{thm}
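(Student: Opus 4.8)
The plan is to prove the theorem by induction on $n$, using Proposition~\ref{prop:wpc} and Proposition~\ref{prop:ufo} together with the structure of the equation \eqref{eq:uen}. The base case $n=1$ is precisely Proposition~\ref{prop:ufo}, which furnishes a unique mild solution $u^1_\ep \in \sC^p(\dom(G^{m-1}))$. So I would assume, as inductive hypothesis, that for all $j \in \{1,\ldots,n-1\}$ the equation \eqref{eq:uen} (with $n$ replaced by $j$) admits a unique mild solution $u^j_\ep \in \sC^{p/j}(\dom(G^{m-j}))$, and deduce the statement for $n$.

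The key observation is that \eqref{eq:uen} is a \emph{linear} equation in the unknown $u^n_\ep$, driven by the semigroup $S_{A+\ep G}$, with drift $f'(u_\ep)u^n_\ep + \varphi_n(u_\ep)$ and diffusion $B'(u_\ep)u^n_\ep + \Phi_n(u_\ep)$; the genuinely nonhomogeneous, lower-order data are $nGu^{n-1}_\ep$, $\varphi_n(u_\ep)$ and $\Phi_n(u_\ep)$. The strategy is therefore to verify that these three forcing terms lie in the right function spaces so that Proposition~\ref{prop:wpc} applies on the space $\dom(G^{m-n})$. By Proposition~\ref{prop:gen}, $S_{A+\ep G}$ is a strongly continuous quasi-contraction semigroup on $\dom(G^{m-n})$ (since $m-n \leq m+1$). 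By the inductive hypothesis $u^{n-1}_\ep \in \sC^{p/(n-1)}(\dom(G^{m-n+1}))$, so applying $G$ lowers the smoothness by one power and gives $nGu^{n-1}_\ep \in \sC^{p/(n-1)}(\dom(G^{m-n}))$; in particular it lies in $L^{p/n}(\Omega; L^1(0,T;\dom(G^{m-n})))$ after accounting for integrability. The coefficients $f'(u_\ep)$ and $B'(u_\ep)$ are Lipschitz and bounded with values in $\dom(G^{m-n})$ by Assumption~\ref{h:d1} combined with the chain of embeddings $\dom(G^{m-1}) \embed \dom(G^{m-n})$, so the homogeneous part is of the type handled by assumption $\mathrm{H}(m-n,\cdot)$.

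The subtler point, which I expect to be the main obstacle, is the treatment of the remainder terms $\varphi_n(u_\ep)$ and $\Phi_n(u_\ep)$. These are finite linear combinations of terms $B^{(j)}(u_\ep)\bigl(u^{n_1}_\ep,\ldots,u^{n_j}_\ep\bigr)$ with $j \in \{2,\ldots,n\}$, $n_1+\cdots+n_j = n$ and each $n_i \geq 1$. I would estimate each such term in $\sC^{?}(\dom(G^{m-n}))$ as follows: by Assumption~\ref{h:dh}, $B^{(j)}(u_\ep)$ is a bounded multilinear map into $\cL^2(U;\dom(G^{m-j-1}))$, and since $j \geq 2$ we have $m-j-1 \leq m-n$ precisely when $n \geq j+1$; the borderline case $j=n$ (so all $n_i=1$) uses $B^{(n)}$ with codomain $\dom(G^{m-n-1})$ or the boundedness noted after Assumption~\ref{h:dh}, and I would need to check the smoothness bookkeeping carefully there. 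The integrability exponent is determined by a Hölder-type argument: each factor $u^{n_i}_\ep$ lies in $\sC^{p/n_i}$, and since $\sum_i 1/(p/n_i) = (\sum_i n_i)/p = n/p$, the product lies in $\sC^{p/n}$, which matches the target exponent $p/n$ exactly. This is the reason the theorem is stated with $\sC^{p/n}$ rather than $\sC^p$. Having placed $nGu^{n-1}_\ep$, $\varphi_n(u_\ep)$ and $\Phi_n(u_\ep)$ in the appropriate $L^{p/n}$-based spaces over $\dom(G^{m-n})$, the linear equation \eqref{eq:uen} falls within the scope of the well-posedness theory recalled before Theorem~\ref{thm:str} (applied on the Hilbert space $\dom(G^{m-n})$ with semigroup $S_{A+\ep G}$), yielding a unique mild solution $u^n_\ep \in \sC^{p/n}(\dom(G^{m-n}))$ and completing the induction.
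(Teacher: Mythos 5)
Your proposal follows essentially the same route as the paper's proof: induction on $n$ with Proposition~\ref{prop:ufo} as the base case, treating \eqref{eq:uen} as a linear equation whose lower-order forcing terms $nGu^{n-1}_\ep$, $\varphi_n(u_\ep)$ and $\Phi_n(u_\ep)$ must be placed in $L^{p/n}$-based spaces over $\dom(G^{m-n})$, the H\"older bookkeeping $n_1/p+\cdots+n_j/p=n/p$ explaining the exponent $p/n$, and the conclusion obtained by invoking Proposition~\ref{prop:wpc}. Your handling of the term $nGu^{n-1}_\ep$ is in fact more explicit than the paper's.

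The one point you leave open --- the borderline term $j=n$ of $\Phi_n(u_\ep)$ --- is a genuine issue, and neither of the two fixes you sketch works as stated: the codomain $\dom(G^{m-n-1})$ is of no use, since $\dom(G^{m-n-1})$ does \emph{not} embed into $\dom(G^{m-n})$ (the inclusion goes the other way), and the remark following Assumption~\ref{h:dh} only covers the case $j=n=m$. The correct bookkeeping is to read Assumption~\ref{h:dh} at index $j-1$ rather than $j$: since $B^{(j-1)}$ is of class $C^1_b$ from $H$ into $\cL_{j-1}(H;\cL^2(U;\dom(G^{m-j})))$ with derivative bounded uniformly in $(\omega,t)$, that derivative $B^{(j)}$ is bounded from $H$ into $\cL_j(H;\cL^2(U;\dom(G^{m-j})))$, and $\dom(G^{m-j}) \embed \dom(G^{m-n})$ holds for every $j \leq n$. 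This disposes of all $j \in \{2,\ldots,n\}$, including $j=n$, in one stroke. (Note also a slip in your inequality: the embedding $\dom(G^{m-j-1}) \embed \dom(G^{m-n})$ requires $m-j-1 \geq m-n$, i.e. $j \leq n-1$; you wrote the exponent comparison in the reverse direction, though your conclusion about which $j$ are unproblematic was right.) Your caution was well placed: the paper's own proof is imprecise at exactly this spot, since it invokes the embedding of $\dom(G^{m-n})$ into $\dom(G^{m-j-1})$, which is the wrong direction for the estimate and in any case does not cover $j=n$; the reading of Assumption~\ref{h:dh} given above is what makes the argument complete.
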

\begin{proof}
  Recall that $S_{A+\ep G}$ is a strongly continuous semigroup of
  quasi-contractions on $\dom(G^k)$ for every $k \in \{0,\ldots,m\}$ for
  all $\ep \in [0,1]$, and $u_\ep \in \sC^p(\dom(G^m))$. Moreover,
  Proposition~\ref{prop:ufo} yields that the claim is true for
  $n=1$. Therefore it suffices to show that if the claim is true for
  all $n \in \{1,\ldots,n'-1\}$, then it is true also for $n=n'$. To
  this purpose, note that $f'$ and $B'$ satisfy assumption
  $\mathrm{H}(m-n,p)$ for every $n \in \{1,\ldots,m\}$. Moreover,
  each term of $\Phi_n(u_\ep)$ is of the type
  \[
    B^{(j)}(u_\ep)(u_\ep^{n_1},\ldots,u_\ep^{n_j}), \qquad
    n_1+\cdots+n_j=n, \quad j \leq n,
  \]
  hence the differentiability assumptions on $B$ imply
  \begin{align*}
  &\norm[\big]{B^{(j)}(u_\ep)(u_\ep^{n_1},\ldots,%
    u_\ep^{n_j})}_{\cL^2(U;\dom(G^{m-j-1}))}\\
  &\hspace{3em} \leq
    \norm[\big]{B^{(j)}(u_\ep)}_{\cL_j(H;\cL^2(U;\dom(G^{m-j-1})))}
    \norm[\big]{u_\ep^{n_1}} \, \cdots \,
    \norm[\big]{u_\ep^{n_j}}.
  \end{align*}
  Since $\dom(G^{m-n})$ is contractively embedded in $\dom(G^{m-j-1})$
  for every $j \in \{2,\ldots,n\}$, we have, by H\"older's inequality,
  \begin{align*}
  &\norm[\big]{B^{(j)}(u_\ep)(u_\ep^{n_1},\ldots,%
    u_\ep^{n_j})}_{L^{p/n}(\Omega;L^2(0,T;\cL^2(U;\dom(G^{m-n}))))}\\
  &\hspace{3em} \lesssim \sqrt{T} \norm[\Big]{%
    \norm[\big]{u_\ep^{n_1}}_{C([0,T];H)} \, \cdots \,
    \norm[\big]{u_\ep^{n_j}}_{C([0,T];H)}}_{L^{p/n}(\Omega)}\\
  &\hspace{3em} \lesssim_T \norm[\big]{u_\ep^{n_1}}_{\sC^{p/n_1}}
    \, \cdots \, \norm[\big]{u_\ep^{n_j}}_{\sC^{p/n_j}}.
  \end{align*}
  where
  \[
  \frac{n}{p} = \frac{n_1}{p} + \cdots + \frac{n_j}{p}.
  \]
  By the inductive reasoning mentioned above, since obviously
  $n_1,\ldots,n_j < n$ for every $j \leq n$, we conclude that indeed
  $\Phi_n(u_\ep) \in
  L^{p/n}(\Omega;L^2(0,T;\cL^2(U;\dom(G^{m-n}))))$. Applying an entirely
  similar reasoning to $\varphi_n(u_\ep)$ we infer that the maps $x
  \mapsto f'(u_\ep)x + \varphi_n(u_\ep)$ and $x \mapsto B'(u_\ep)x +
  \Phi_n(u_\ep)$ satisfy assumption $\mathrm{H}(m-n,p/n)$, hence the
  proof is completed invoking Proposition~\ref{prop:wpc}.
\end{proof}

\section{First-order differentiability}
\label{sec:d1}
Let Assumption~\ref{h:d1} as well as the hypotheses of
Proposition~\ref{prop:ufo} be in force throughout this section.
\begin{thm}
  The map $\ep \mapsto u_\ep$ is of class $C^1$ from $[0,1]$ to
  $\sC^p(\dom(G^{m-1}))$ and $u'_\ep = u^1_\ep$ for all $\ep \in
  [0,1]$.
\end{thm}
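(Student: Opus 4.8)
The plan is to show that the difference quotient $w_h := (u_{\ep+h}-u_\ep)/h$ converges to $u^1_\ep$ in $\sC^p(\dom(G^{m-1}))$ as $h \to 0$, and then that the map $\ep \mapsto u^1_\ep$ is continuous into the same space. Since, by Proposition~\ref{prop:wpc}, $u_{\ep+h} \in \sC^p(\dom(G^m))$, the term $hGu_{\ep+h}$ is a well-defined element of $\sC^p(\dom(G^{m-1}))$, so that $u_{\ep+h}$ can be regarded, in the mild sense relative to the semigroup $S_{A+\ep G}$, as the solution of \eqref{eq:ep} carrying the additional additive term $-hGu_{\ep+h}$ in its drift (this amounts to the Duhamel relation between $S_{A+(\ep+h)G}$ and $S_{A+\ep G}$ on $\dom(G^m)$, which is legitimate precisely because $u_{\ep+h}$ takes values in $\dom(G^m)$). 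Subtracting the mild formulation of $u_\ep$, now both written relative to $S_{A+\ep G}$, and dividing by $h$, I would use the fundamental theorem of calculus to represent the increments of $f$ and $B$ as $f(u_{\ep+h})-f(u_\ep) = \tilde f'_h\,(u_{\ep+h}-u_\ep)$ and $B(u_{\ep+h})-B(u_\ep)=\tilde B'_h\,(u_{\ep+h}-u_\ep)$, where $\tilde f'_h := \int_0^1 f'\bigl(u_\ep + s(u_{\ep+h}-u_\ep)\bigr)\,ds$ and $\tilde B'_h$ is defined analogously. This identifies $w_h$ as the mild solution, relative to $S_{A+\ep G}$ and with $w_h(0)=0$, of the linear equation with drift $\tilde f'_h w_h - Gu_{\ep+h}$ and diffusion $\tilde B'_h w_h$.

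Next I would set $z_h := w_h - u^1_\ep$ and subtract \eqref{eq:uu} from the equation for $w_h$. A short computation shows that $z_h$ solves, relative to $S_{A+\ep G}$ and with $z_h(0)=0$, a linear equation whose drift is $x \mapsto \tilde f'_h x + F_h$ and whose diffusion is $x \mapsto \tilde B'_h x + \Psi_h$, with additive forcing terms
\[
  F_h := -G(u_{\ep+h}-u_\ep) + (\tilde f'_h - f'(u_\ep))u^1_\ep, \qquad
  \Psi_h := (\tilde B'_h - B'(u_\ep))u^1_\ep .
\]
The idea is then to apply Theorem~\ref{thm:str} along an arbitrary sequence $h_n \to 0$: the generator $A+\ep G$ is the same for every $n$; the affine coefficients are Lipschitz uniformly in $n$, since $f'$ and $B'$ are bounded by the Lipschitz constants of $f$ and $B$ as noted after Assumption~\ref{h:d1}; and the limiting equation is the homogeneous one with coefficients $f'(u_\ep)$, $B'(u_\ep)$, zero forcing and zero initial datum, whose unique mild solution is $0$. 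It therefore suffices to verify that the coefficients converge to the limiting ones in the sense required by Theorem~\ref{thm:str}, i.e. pointwise in $(\omega,t)$, and that \eqref{eq:faba} holds (for the latter, take $a=0$: both $F_{h_n}$ and $\Psi_{h_n}$ are controlled by $\sC^p(\dom(G^{m-1}))$-quantities).

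The pointwise convergence would be obtained by exploiting that $\sC^p$-convergence yields almost sure uniform convergence along subsequences. Given $h_n \to 0$, Proposition~\ref{prop:wpc} gives $u_{\ep+h_n}\to u_\ep$ in $\sC^p(\dom(G^m))$, so along a further subsequence $u_{\ep+h_n}(\omega,\cdot)\to u_\ep(\omega,\cdot)$ uniformly in $\dom(G^m)$ for almost every $\omega$. Along this subsequence $G(u_{\ep+h_n}-u_\ep)(\omega,t)\to 0$ in $\dom(G^{m-1})$, while continuity of $f'$ and $B'$ gives $\tilde f'_{h_n}(\omega,t)\to f'(u_\ep)(\omega,t)$ and $\tilde B'_{h_n}(\omega,t)\to B'(u_\ep)(\omega,t)$; hence $F_{h_n}(\omega,t)\to 0$ and $\Psi_{h_n}(\omega,t)\to 0$ pointwise. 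Theorem~\ref{thm:str} then yields $z_{h_n}\to 0$ in $\sC^p(\dom(G^{m-1}))$ along the subsequence, and the standard subsequence principle upgrades this to $z_h \to 0$ as $h \to 0$, proving $u'_\ep = u^1_\ep$.

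Finally, for the $C^1$ statement I would establish continuity of $\ep \mapsto u^1_\ep$ by a second application of Theorem~\ref{thm:str} to \eqref{eq:uu}, viewed as a family indexed by $\ep$: the generators $A+\ep_n G$ converge to $A+\ep G$ in the strong resolvent sense by Proposition~\ref{prop:rc}, the coefficients $f'(u_{\ep_n})$, $B'(u_{\ep_n})$ and the additive forcing $Gu_{\ep_n}$ converge to their limits by Proposition~\ref{prop:wpc} and the continuity of $f'$, $B'$ (again along almost surely convergent subsequences), and the initial data are all zero. I expect the main obstacle to be the identification of the equation satisfied by $w_h$ — the passage from two mild formulations written with respect to different semigroups to a single linear equation relative to $S_{A+\ep G}$, and the bookkeeping that brings the coefficient increments into the exact affine form demanded by Theorem~\ref{thm:str}; once the forcing terms $F_h$ and $\Psi_h$ are isolated, their vanishing is a routine dominated-convergence argument.
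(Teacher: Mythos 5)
Your proposal is correct and takes essentially the same route as the paper's proof: the paper likewise identifies $w_{\ep,h}=(u_{\ep+h}-u_\ep)/h-u^1_\ep$ (your $z_h$) as the mild solution, relative to $S_{A+\ep G}$, of a linear equation whose coefficients have exactly your affine form (obtained from the mean-value representation of the increments of $B$), shows the additive forcing terms vanish via almost surely convergent subsequences and dominated convergence, concludes by continuous dependence (Theorem~\ref{thm:str}) that the limit solves the homogeneous linear equation and is therefore zero, and finishes with the same subsequence principle. The only differences are that you carry $f\neq 0$ explicitly (the paper sets $f=0$ for simplicity) and that you also verify continuity of $\ep\mapsto u^1_\ep$, which the $C^1$ claim requires but the paper's proof leaves implicit; for that last step the strong resolvent convergence on $\dom(G^{m-1})$ should be cited from Proposition~\ref{prop:core} rather than Proposition~\ref{prop:rc}, which only covers convergence on $H$ (the two coincide only when $m=1$).
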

\begin{proof}
  Let us set, for any $\ep \in [0,1]$ and $h>0$,
  \[
    w_{\ep,h} := \frac{u_{\ep+h}-u_\ep}{h} - u^1_\ep.
  \]
  We are going to show that $w_{\ep,h}$ converges to zero in
  $\sC^p(\dom(G^{m-1})$ as $h \to 0$. For the sake of simplicity we
  shall assume that $f = 0$, as the calculations regarding this term
  are entirely similar to those for $B$, and in fact quite simpler.
  Elementary manipulations involving the Duhamel formula show that
  $w_{\ep,h}$ is the unique mild solution to the equation
  \begin{align*}
    &dw_{\ep,h} + (A+\ep G)w_{\ep,h}\,dt + G(u_{\ep+h}-u_\ep) = 
      \Bigl( \frac{B(u_{\ep+h})-B(u_\ep)}{h} - B'(u_\ep)u^1_\ep \Bigr)\,dW,\\
    &w_{\ep,h}(0) = 0,
  \end{align*}
  where, by the mean value theorem,
  \[
    B(u_{\ep+h}) - B(u_\ep) = \int_0^1 B'(u_\ep +
    \theta(u_{\ep+h}-u_\ep)) (u_{\ep+h}-u_\ep)\,d\theta,
  \]
  hence also
  \begin{align*}
  &\frac{B(u_{\ep+h})-B(u_\ep)}{h} - B'(u_\ep)u^1_\ep\\
  &\hspace{3em} = \int_0^1 \Bigl( B'(u_\ep + \theta(u_{\ep+h}-u_\ep)) 
    \frac{u_{\ep+h}-u_\ep}{h} - B'(u_\ep)u^1_\ep \Bigr)\,d\theta\\
  &\hspace{3em} = \int_0^1 B'(u_\ep + \theta(u_{\ep+h}-u_\ep))
    w_{\ep,h}\,d\theta\\
  &\hspace{3em} \quad + \int_0^1 \bigl( B'(u_\ep + \theta(u_{\ep+h}-u_\ep))
    - B'(u_\ep) \bigr) u^1_\ep\,d\theta.
  \end{align*}
  By Proposition~\ref{prop:wpc} we have that $G(u_{\ep+h}-u_\ep) \to
  0$ in $L^p(\Omega;L^1(0,T;\dom(G^{m-1})))$ as $h \to 0$. Moreover,
  since $u_{\ep+h} \to u_\ep$ in $\sC^p$, there exists a sequence $h'$
  such that $\sup_{[0,T]} \abs{u_{\ep+h'}-u_\ep} \to 0$ a.s., hence, by
  continuity of $B'$,
  \[
    \lim_{h'\to 0} B'(u_\ep + \theta(u_{\ep+h'}-u_\ep))x = B'(u_\ep)x
  \]
  for all $\theta \in [0,1]$ and $x \in H$ a.e. in
  $\Omega \times [0,T]$. The dominated convergence theorem then
  implies, by the boundedness of $B'$, that
  \[
    \lim_{h'\to 0} \int_0^1 B'(u_\ep + \theta(u_{\ep+h'}-u_\ep))x\,d\theta
    = B'(u_\ep)x
  \]
  for all $x \in H$ a.e. in $\Omega \times [0,T]$. Similarly,
  \[
  \lim_{h' \to 0} \int_0^1 \bigl( B'(u_\ep + \theta(u_{\ep+h'}-u_\ep))
    - B'(u_\ep) \bigr) u^1_\ep\,d\theta = 0
  \]
  a.e. in $\Omega \times [0,T]$, as well as in
  $L^p(\Omega;L^2(0,T;\cL^2(U;\dom(G^{m-1}))))$ again by dominated
  convergence. Therefore $w_{\ep,h'}$ converges in
  $\sC^p(\dom(G^{m-1}))$ to the unique mild solution $v$ to the
  equation
  \[
    dv + (A + \ep G)v\,dt = B'(u_\ep)v\,dW, \qquad v(0)=0.
  \]
  By linearity it immediately follows that $v=0$. The proof is
  completed observing that from every sequence $h$ converging to zero
  we can extract a subsequence $h'$ such that $w_{\ep,h'} \to 0$,
  hence $w_{\ep,h} \to 0$ as $h \to 0$.
\end{proof}

\section{Higher-order differentiability}
\label{sec:dh}
Let us assume that Assumptions~\ref{h:d1} and \ref{h:dh}, and the
hypotheses of Proposition~\ref{prop:ufo} are fulfilled.
\begin{thm}
  The map $\ep \mapsto u_\ep$ is of class $C^k$ from $[0,1]$ to
  $\sC^{p/k}(\dom(G^{m-k}))$ and $u^{(k)}_\ep = u^k_\ep$ for all $k
  \in \{1,\ldots,m\}$ and $\ep \in [0,1]$.
\end{thm}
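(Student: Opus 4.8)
The plan is to argue by induction on $k$, the base case $k=1$ being exactly the theorem of Section~\ref{sec:d1}. For the inductive step I would assume that $\ep \mapsto u_\ep$ is of class $C^{k-1}$ from $[0,1]$ to $\sC^{p/(k-1)}(\dom(G^{m-k+1}))$ with $u^{(n)}_\ep = u^n_\ep$ for all $n \leq k-1$. Since $C^{k-1}$-regularity entails continuity of all derivatives up to order $k-1$, this hypothesis already supplies the continuity of $\ep \mapsto u^n_\ep$ in $\sC^{p/(k-1)}(\dom(G^{m-k+1}))$ for every $n \leq k-1$, as well as the convergence of the difference quotient $h^{-1}(u^{k-2}_{\ep+h}-u^{k-2}_\ep)$ to $u^{k-1}_\ep$ in that space as $h \to 0$. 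It then suffices to show that $\ep \mapsto u^{k-1}_\ep$ is of class $C^1$ from $[0,1]$ to $\sC^{p/k}(\dom(G^{m-k}))$ with derivative $u^k_\ep$: combined with the inductive hypothesis and the continuous embedding $\sC^{p/(k-1)}(\dom(G^{m-k+1})) \embed \sC^{p/k}(\dom(G^{m-k}))$ (in both integrability and $G$-regularity, as $\mathbb{P}$ is finite), this upgrades $C^{k-1}$- to $C^k$-regularity into $\sC^{p/k}(\dom(G^{m-k}))$.

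Following the scheme of Section~\ref{sec:d1}, I would set
\[
  w^k_{\ep,h} := \frac{u^{k-1}_{\ep+h}-u^{k-1}_\ep}{h} - u^k_\ep
\]
and show $w^k_{\ep,h} \to 0$ in $\sC^{p/k}(\dom(G^{m-k}))$ as $h \to 0$. Subtracting \eqref{eq:uen} written for $u^{k-1}_\ep$ at the parameter $\ep$ from the same equation written for $u^{k-1}_{\ep+h}$ at $\ep+h$, dividing by $h$, and subtracting \eqref{eq:uen} for $u^k_\ep$, elementary Duhamel manipulations show that $w^k_{\ep,h}$ is the mild solution of a linear equation of the form
\[
  dw^k_{\ep,h} + (A+\ep G)w^k_{\ep,h}\,dt + R_h\,dt
  = \bigl(f'(u_{\ep+h})w^k_{\ep,h} + \rho_h\bigr)\,dt
  + \bigl(B'(u_{\ep+h})w^k_{\ep,h} + \sigma_h\bigr)\,dW,
\]
with $w^k_{\ep,h}(0)=0$. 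Here the splitting $A+(\ep+h)G = (A+\ep G)+hG$ together with $Gu^{k-1}_{\ep+h} - kGu^{k-1}_\ep = G(u^{k-1}_{\ep+h}-u^{k-1}_\ep) - (k-1)Gu^{k-1}_\ep$ produces the forcing term
\[
  R_h = G(u^{k-1}_{\ep+h}-u^{k-1}_\ep) + (k-1)G\,w^{k-1}_{\ep,h},
  \qquad
  w^{k-1}_{\ep,h} := \frac{u^{k-2}_{\ep+h}-u^{k-2}_\ep}{h} - u^{k-1}_\ep,
\]
which tends to zero in $L^{p/k}(\Omega;L^1(0,T;\dom(G^{m-k})))$: the first summand by continuity of $\ep \mapsto u^{k-1}_\ep$ in $\dom(G^{m-k+1})$, the second by the inductive statement $w^{k-1}_{\ep,h} \to 0$, in both cases followed by one application of $G$ and the embedding above.

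The genuinely laborious part is the analysis of the remainders $\rho_h$, $\sigma_h$, arising from the difference quotients of the coefficient and forcing terms $f'(u_\ep)u^{k-1}_\ep + \varphi_{k-1}(u_\ep)$ and $B'(u_\ep)u^{k-1}_\ep + \Phi_{k-1}(u_\ep)$. For each multilinear summand $B^{(j)}(u_\ep)(u^{n_1}_\ep,\ldots,u^{n_j}_\ep)$ of $\Phi_{k-1}$ I would insert and subtract intermediate terms, writing its difference quotient as a sum in which each factor is either a difference quotient $h^{-1}(u^{n_i}_{\ep+h}-u^{n_i}_\ep)$, converging to $u^{n_i+1}_\ep$ by the inductive hypothesis, or a coefficient increment $h^{-1}(B^{(j)}(u_{\ep+h})-B^{(j)}(u_\ep))$, converging, by the mean value theorem and the $C^1_b$-regularity of Assumption~\ref{h:dh}, to $B^{(j+1)}(u_\ep)(u^1_\ep,\,\cdot\,)$. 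By the Leibniz and chain rules this is precisely the mechanism generating the terms of $\Phi_k$ and $\varphi_k$ from those of $\Phi_{k-1}$ and $\varphi_{k-1}$, so that the limit of the difference quotient reproduces the right-hand side of \eqref{eq:uen} for $u^k_\ep$, everything left over being collected into $\rho_h$, $\sigma_h$. The required integrability $\rho_h,\sigma_h \in L^{p/k}(\Omega;\cdot)$ and their convergence to zero are then obtained exactly as in the well-posedness theorem of Section~\ref{sec:fd}, distributing the exponents by H\"older's inequality along $n_1+\cdots+n_j=k$ and dominating the $\theta$-integrals produced by the mean value theorem with the uniform bounds on the derivatives of $B$, so that dominated convergence applies along a subsequence $h'$ for which $\sup_{[0,T]}\abs{u_{\ep+h'}-u_\ep}\to 0$ almost surely.

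With these ingredients the conclusion follows the first-order pattern: Theorem~\ref{thm:str} (continuous dependence, with the generator $A+\ep G$ fixed and $f'(u_{\ep+h}),B'(u_{\ep+h})$ converging pointwise to $f'(u_\ep),B'(u_\ep)$) yields, along $h'$, convergence of $w^k_{\ep,h'}$ to the mild solution $v$ of $dv+(A+\ep G)v\,dt = f'(u_\ep)v\,dt + B'(u_\ep)v\,dW$ with $v(0)=0$, whence $v=0$ by linearity; a standard subsequence argument then gives $w^k_{\ep,h}\to 0$ as $h\to 0$. Finally, continuity of the derivative $\ep\mapsto u^k_\ep$ in $\sC^{p/k}(\dom(G^{m-k}))$, needed for $C^k$- rather than merely $k$-times differentiability, follows by applying Theorem~\ref{thm:str} once more to \eqref{eq:uen} for $u^k_\ep$, whose generators converge in the strong resolvent sense by Proposition~\ref{prop:core} and whose coefficients and forcing terms $kGu^{k-1}_\ep$, $\varphi_k(u_\ep)$, $\Phi_k(u_\ep)$ depend continuously on $\ep$ through the lower-order solutions $u_\ep,u^1_\ep,\ldots,u^{k-1}_\ep$, all continuous by induction. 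I expect the matching of the difference-quotient limits with the formal derivatives $\varphi_k,\Phi_k$, together with the bookkeeping of the integrability exponents, to be the main obstacle.
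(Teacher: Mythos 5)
Your proposal is correct and follows essentially the same route as the paper: induction on the order, writing the difference quotient minus the formal derivative as the mild solution of a linear SPDE whose extra forcing terms (the $G$-terms via the inductive hypothesis, the nonlinear remainders via the Leibniz/chain-rule structure relating $\Phi_k$ to $\Phi_{k-1}$) vanish as $h \to 0$, then invoking the continuous dependence theorem and linearity to conclude that the limit is zero, with the usual subsequence argument. The only differences are cosmetic (an index shift, freezing the coefficient at $u_{\ep+h}$ rather than $u_\ep$, and a telescoping expansion of the difference quotient of $\Phi_{k-1}$ in place of the paper's recursion identity $\Phi_{k+1}(u_\ep) = B''(u_\ep)(u_\ep^k,u_\ep^1) + (\Phi_k(u_\ep))'$), plus your explicit verification of the continuity of the top-order derivative, which the paper leaves implicit.
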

\begin{proof}
  Let $\ep \in [0,1]$. We have already proved that $u'_\ep=u^1_\ep$
  and that $u^k_\ep \in C^{p/k}(\dom(G^{m-k}))$ for every $k \in
  \{1,\ldots,m\}$, hence we can achieve the proof by induction as
  follows: assuming that $u^j_\ep=u^{(j)}_\ep$ for all $j \in
  \{1,\ldots,k\}$, we are going to show that
  $u_\ep^{k+1}=u_\ep^{(k+1)}$. As before, we shall assume for
  simplicity that $f=0$. One has
  \begin{gather*}
  du^k_{\ep+h} + (A+\ep G)u^k_{\ep+h}\,dt + hGu^k_{\ep+h}\,dt
  + kGu_{\ep+h}^{k-1}\,dt
  = \bigl( B'(u_{\ep+h})u^k_{\ep+h} + \Phi_k(u_{\ep+h}) \bigr)\,dW,\\
  du^k_\ep + (A+\ep G)u_\ep^k\,dt
  + kGu_\ep^{k-1}\,dt
  = \bigl( B'(u_\ep)u_\ep^k + \Phi_k(u_\ep) \bigr)\,dW,\\
  du^{k+1}_\ep + (A+\ep G)u_\ep^{k+1}\,dt
  + (k+1)Gu_\ep^k\,dt
  = \bigl( B'(u_\ep)u_\ep^{k+1} + \Phi_{k+1}(u_\ep) \bigr)\,dW,
  \end{gather*}
  hence
  \[
  w^k_{\ep,h} := \frac{u^k_{\ep+h}-u^k_\ep}{h} - u^{k+1}_\ep
  \]
  is the unique mild solution to
  \begin{align*}
  &dw^k_{\ep,h} + (A+\ep G)w^k_{\ep,h}\,dt
  + G\bigl(u^k_{\ep+h} - u^k_\ep\bigr)\,dt 
    + kGw^{k-1}_{\ep,h}\,dt\\
  &\hspace{3em} = \Bigl( \frac{B'(u_{\ep+h})u_{\ep+h}^k %
    - B'(u_\ep)u_\ep^k}{h} - B'(u_\ep)u^{k+1}_\ep \Bigr) \,dW\\
  &\hspace{3em} \quad + \Bigl(
    \frac{\Phi_k(u_{\ep+h}) - \Phi_k(u_\ep)}{h} - \Phi_{k+1}(u_\ep)
    \Bigr) \,dW,\\
  &w^k_{\ep,h}(0)=0.
  \end{align*}
  Writing
  \[
  B'(u_{\ep+h})u_{\ep+h}^k - B'(u_\ep)u_\ep^k = B'(u_\ep) \bigl(
  u_{\ep+h}^k - u_\ep^k \bigr) + \bigl( B'(u_{\ep+h}) - B'(u_\ep)
  \bigr) u_{\ep+h}^k
  \]
  yields
  \begin{align*}
  &\frac{B'(u_{\ep+h})u_{\ep+h}^k - B'(u_\ep)u_\ep^k}{h} - B'(u_\ep)u^{k+1}_\ep\\
  &\hspace{3em} = B'(u_\ep) w^k_{\ep,h} %
  + \frac{\bigl( B'(u_{\ep+h}) - B'(u_\ep) \bigr) u_{\ep+h}^k}{h},
  \end{align*}
  thus also
  \begin{align*}
  &dw^k_{\ep,h} + (A+\ep G)w^k_{\ep,h}\,dt
  + G\bigl(u^k_{\ep+h} - u^k_\ep\bigr)\,dt 
    + kGw^{k-1}_{\ep,h}\,dt\\
  &\hspace{3em} = B'(u_\ep) w^k_{\ep,h} \,dW %
    + \Bigl( \frac{\Phi_k(u_{\ep+h}) - \Phi_k(u_\ep)}{h} - \Phi_{k+1}(u_\ep)\\
  &\hspace{5em} + \frac{\bigl( B'(u_{\ep+h}) - B'(u_\ep) \bigr) u_{\ep+h}^k}{h}
    \Bigr) \,dW.
  \end{align*}
  The identities
  \begin{align*}
  B(u_\ep)^{(k)}
  &= B'(u_\ep) u_\ep^{(k)} + \Phi_k(u_\ep),\\
  B(u_\ep)^{(k+1)}
  &= B'(u_\ep) u_\ep^{(k+1)} + \Phi_{k+1}(u_\ep)\\
  &= \bigl( B'(u_\ep) u_\ep^{(k)} \bigr)'
    + \bigl( \Phi_k(u_\ep) \bigr)'\\
  &= B'(u_\ep) u_\ep^{(k+1)} + B''(u_\ep)(u_\ep^k,u_\ep^1)
    + \bigl( \Phi_k(u_\ep) \bigr)'
  \end{align*}
  imply
  \[
  \Phi_{k+1}(u_\ep) = B''(u_\ep)(u_\ep^k,u_\ep^1) %
  + \bigl( \Phi_k(u_\ep) \bigr)',
  \]
  from which it follows that
  \[
  \frac{\Phi_k(u_{\ep+h}) - \Phi_k(u_\ep)}{h} - \Phi_{k+1}(u_\ep)
  + \frac{\bigl( B'(u_{\ep+h}) - B'(u_\ep) \bigr) u_{\ep+h}^k}{h}
  \longto 0
  \]
  in $\cL^2(U;H)$ as $h \to 0$ for all $(\omega,t) \in \Omega \times
  [0,T]$. Moreover, it follows by the inductive assumption that, as $h
  \to 0$, $u^k_{\ep+h} \to u^k_\ep$ and $w^{k-1}_{\ep,h} \to 0$ in
  $\sC^{p/k}(\dom(G^{m-k}))$, which in turn implies
  that
  \[
  G( u^k_{\ep+h} - u^k_\ep ) \longto 0 \quad \text{and} \quad 
  Gw^{k-1}_{\ep,h} \longto 0
  \]
  in $\sC^{p/k}(\dom(G^{m-k-1})) \embed \sC^{p/(k+1)}(\dom(G^{m-k-1}))$.
  Therefore $w^k_{\ep,h}$ converges in
  $\sC^{p/(k+1)}(\dom(G^{m-k-1}))$ to the unique mild solution to the
  equation
  \[
  dv + (A + \ep G)v\,dt = B'(u_\ep)v\,dW, \qquad v(0)=0,
  \]
  which is zero by linearity.
\end{proof}

\bibliographystyle{amsplain}
\bibliography{ref}

\def\polhk#1{\setbox0=\hbox{#1}{\ooalign{\hidewidth
  \lower1.5ex\hbox{`}\hidewidth\crcr\unhbox0}}}
\providecommand{\bysame}{\leavevmode\hbox to3em{\hrulefill}\thinspace}
\providecommand{\MR}{\relax\ifhmode\unskip\space\fi MR }
\providecommand{\MRhref}[2]{%
  \href{http://www.ams.org/mathscinet-getitem?mr=#1}{#2}
}
\providecommand{\href}[2]{#2}
\begin{thebibliography}{10}

\bibitem{cm:epsd1}
S.~Albeverio, C.~Marinelli, and E.~Mastrogiacomo, \emph{Singular perturbations
  and asymptotic expansions for {SPDEs} with an application to term structure
  models}, 2020, arXiv preprint.

\bibitem{Bog:TVS}
V.~I. Bogachev and O.~G. Smolyanov, \emph{Topological vector spaces and their
  applications}, Springer, Cham, 2017. \MR{3616849}

\bibitem{BuBe}
P.~L. Butzer and H.~Berens, \emph{Semi-groups of operators and approximation},
  Springer Verlag New York Inc., New York, 1967. \MR{0230022}

\bibitem{DP:K}
G.~Da~Prato, \emph{Kolmogorov equations for stochastic {PDE}s}, Birkh\"auser
  Verlag, Basel, 2004. \MR{2111320 (2005m:60002)}

\bibitem{EnNa}
K.-J. Engel and R.~Nagel, \emph{One-parameter semigroups for linear evolution
  equations}, Springer Verlag, New York, 2000. \MR{MR1721989 (2000i:47075)}

\bibitem{Kato:sing}
T.~Kato, \emph{Singular perturbation and semigroup theory}, Turbulence and
  {N}avier-{S}tokes equations ({P}roc. {C}onf., {U}niv. {P}aris-{S}ud, {O}rsay,
  1975), Lecture Notes in Math., vol. 565, 1976, pp.~104--112. \MR{0458244}

\bibitem{Kato}
\bysame, \emph{Perturbation theory for linear operators}, Springer Verlag,
  Berlin, 1995, Reprint of the 1980 edition. \MR{1335452}

\bibitem{KvN2}
M.~Kunze and J.~van Neerven, \emph{Continuous dependence on the coefficients
  and global existence for stochastic reaction diffusion equations}, J.
  Differential Equations \textbf{253} (2012), no.~3, 1036--1068. \MR{2922662}

\bibitem{cm:SIMA18}
C.~Marinelli, \emph{On well-posedness of semilinear stochastic evolution
  equations on {$L_p$} spaces}, SIAM J. Math. Anal. \textbf{50} (2018), no.~2,
  2111--2143. \MR{3784905}

\bibitem{cm:JFA13}
C.~Marinelli, G.~Ziglio, and L.~Di~Persio, \emph{Approximation and convergence
  of solutions to semilinear stochastic evolution equations with jumps}, J.
  Funct. Anal. \textbf{264} (2013), no.~12, 2784--2816. \MR{3045642}

\end{thebibliography}

\end{document}